\newtheorem{theorem}{Theorem}[section]
\newtheorem{proposition}[theorem]{Proposition}
\newtheorem{corollary}[theorem]{Corollary}
\newtheorem{definition}[theorem]{Definition}
\newtheorem{remark}[theorem]{Remark}
\newtheorem{example}[theorem]{Example}
\newcommand{\R}{\mathbb{R}}
\newcommand{\N}{\mathbb{N}}
\renewcommand{\epsilon}{\varepsilon}
\begin{document}

\title[An OT characterization of convex order]{An optimal transport based characterization of convex order}

\date{\today}

\author{Johannes Wiesel}
\address{Johannes Wiesel\newline
Columbia University, Department of Statistics\newline
1255 Amsterdam Avenue\newline
New York, NY 10027, USA}
\email{johannes.wiesel@columbia.edu}
\author{Erica Zhang}
\address{Erica Zhang\newline
Columbia University, Department of Statistics\newline
1255 Amsterdam Avenue\newline
New York, NY 10027, USA}
\email{yz4232@columbia.edu}

\begin{abstract}
For probability measures $\mu,\nu$ and $\rho$ define the cost functionals 
\begin{align*}
C(\mu,\rho):=\sup_{\pi\in \Pi(\mu,\rho)} \int \langle x,y\rangle\, \pi(dx,dy),\quad C(\nu,\rho):=\sup_{\pi\in \Pi(\nu,\rho)} \int \langle x,y\rangle\, \pi(dx,dy),
\end{align*}
where $\langle\cdot, \cdot\rangle$ denotes the scalar product and $\Pi(\cdot,\cdot)$ is the set of couplings.
We show that two probability measures $\mu$ and $\nu$ on $\R^d$ with finite first moments are in convex order (i.e. $\mu\preceq_c\nu$) iff
$C(\mu,\rho)\le C(\nu,\rho)$
holds for all probability measures $\rho$ on $\R^d$ with bounded support. This generalizes a result by Carlier. Our proof relies on a quantitative bound for the infimum of $\int f\,d\nu -\int f\,d\mu$ over all $1$-Lipschitz functions $f$, which is obtained through optimal transport duality and Brenier's theorem. Building on this result, we derive new proofs of well-known one-dimensional characterizations of convex order. We also describe new computational methods for investigating convex order and applications to model-independent arbitrage strategies in mathematical finance.
\end{abstract}

\thanks{JW acknowledges support by NSF Grant DMS-2205534. Part of this research was performed while JW was visiting the Institute for Mathematical and Statistical Innovation (IMSI), which is supported by the National Science Foundation (Grant No. DMS-1929348). JW thanks Beatrice Acciaio, Guillaume Carlier, Max Nendel, Gudmund Pammer and Ruodu Wang for helpful discussions.
EZ acknowledges support through the summer internship program of the Columbia university statistics department.}
\maketitle

\section{Introduction and main result}

Fix two probability measures $\mu,\nu\in \mathcal{P}(\R^d)$ with $$\int |x|\,\mu(dx)<\infty, \quad \int |y|\,\nu(dy)<\infty.$$
Recall that $\mu$ and $\nu$ are in convex order (denoted by $\mu\preceq_c \nu$) iff
\begin{align*}
\int f\,d\mu\le \int f\,d\nu \qquad \text{for all convex functions } f:\R^d\to \R.
\end{align*}
As any convex function is bounded from below by an affine function, the above integrals take values in $(-\infty, \infty]$.
The notion of convex order is very well studied, see e.g. \cite{ross1996stochastic, muller2002comparison,shaked2007stochastic, arnold2012majorization} and the references therein for an overview. It plays a pivotal role in mathematical finance since \cite{strassen1965existence} established that $\mu\preceq_c\nu$ if and only if $\mathcal{M}(\mu,\nu)$ --- the set of martingale laws on $\R^d\times \R^d$ with marginals $\mu$ and $\nu$ --- is non-empty. This result is also the reason why convex order  has taken the center stage in the field of martingale optimal transport, see e.g. \cite{galichon2014stochastic, beiglbock2013model, beiglbock2015complete, de2019irreducible, obloj2017structure, guo2019computational, alfonsi2017sampling, alfonsi2020sampling,alfonsi2020squared, jourdain2022martingale, pietro} and the references therein. Furthermore, convex order plays a pivotal role in dependence modelling and risk aggregation, see e.g. \cite{tchen1980inequalities, ruschendorf2002variance, wang2011complete, embrechts2013model, bernard2017value}.   \\
While there is an abundance of explicit characterizations of convex order available in one dimension (i.e. $d=1$) -- see e.g. \cite[Chapter 3]{shaked2007stochastic}) --- the case $d>1$ seems to be less studied to the best of our knowledge. The main goal of this article is to fill this gap: we discuss a  characterization of convex order, that holds in general dimensions, and is based on the theory of optimal transport (OT). Optimal transport goes back to the seminal works of \cite{monge1781memoire} and \cite{kantorovich1942translocation}. It is concerned with the problem of transporting probability distributions in a cost-optimal way. We refer to \cite{rachev1998mass} and \cite{villani2003topics, villani2008optimal} for an overview. For this paper we only need a few basic concepts from OT. Most importantly we will need the cost functionals
\begin{align*}
C(\mu,\rho):=\sup_{\pi\in \Pi(\mu,\rho)} \int \langle x,y\rangle\, \pi(dx,dy),\qquad C(\nu,\rho):=\sup_{\pi\in \Pi(\nu,\rho)} \int \langle x,y\rangle\, \pi(dx,dy).
\end{align*}
Here $\Pi(\mu,\nu)$ denotes the set of probability measures on $\R^d\times \R^d$ with marginals $\mu$ and $\nu$. Our main result is the following:
\begin{theorem}\label{thm:main2}
Assume that $\mu,\nu\in \mathcal{P}(\R^d)$ have finite first moments.
Then 
\begin{align}\label{eq:main}
\inf_{f\in \mathcal{C}^1(\R^d)}\left(\int f\,d\nu - \int f\,d\mu \right)=\inf_{\rho\in \mathcal{P}^1(\R^d)} \left( C(\nu,\rho)-C(\mu,\rho)\right),
\end{align}
where
\begin{align*}
\mathcal{P}^1(\R^d):=\{\rho\in \mathcal{P}(\R^d): \ \mathrm{supp}(\rho)\subseteq B_1(0)\}
\end{align*}
and
\begin{align*}
\mathcal{C}^1(\R^d):=\{f:\R^d\to \R \ \mathrm{ convex}, 1\text{-}\mathrm{Lipschitz}\}.
\end{align*}
\end{theorem}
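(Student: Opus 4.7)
The plan is to establish both inequalities via Kantorovich duality for the cost $c(x,y)=\langle x,y\rangle$. The key identity is
\begin{align*}
C(\eta,\rho) = \inf_{\phi}\left( \int \phi\,d\eta + \int \phi^*\,d\rho\right),
\end{align*}
where $\phi$ ranges over proper convex lower-semicontinuous functions on $\R^d$ and $\phi^*(y) = \sup_x(\langle x,y\rangle - \phi(x))$ is the Legendre transform. A crucial observation is that whenever $\mathrm{supp}(\rho) \subseteq B_1(0)$, the dual optimizer can be taken in $\mathcal{C}^1(\R^d)$: replacing $\phi$ by $x \mapsto \sup_{y\in B_1(0)}(\langle x,y\rangle - \phi^*(y))$ gives a supremum of affine functions with slopes in $B_1(0)$, hence a convex $1$-Lipschitz function whose contribution to the dual can only improve.

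For the inequality $\ge$ in \eqref{eq:main}, I would start from $f\in \mathcal{C}^1(\R^d)$, choose a Borel measurable selection $T$ of the subdifferential $\partial f$ (which satisfies $T(x)\in B_1(0)$ since $f$ is $1$-Lipschitz), and set $\rho := T_{\#}\mu \in \mathcal{P}^1(\R^d)$. The equality case of Fenchel--Young gives $\langle x, T(x)\rangle = f(x) + f^*(T(x))$ pointwise, so $\int \langle x, T(x)\rangle\,d\mu = \int f\,d\mu + \int f^*\,d\rho$. This pins down $C(\mu,\rho)$ exactly: feasibility of the coupling $(\mathrm{id},T)_{\#}\mu$ gives ``$\ge$'' while weak duality with $\phi=f$ gives ``$\le$''. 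Weak duality alone bounds $C(\nu,\rho) \le \int f\,d\nu + \int f^*\,d\rho$; subtracting and taking the infimum over $f$ delivers the inequality.

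For the reverse inequality $\le$, I would fix $\rho\in \mathcal{P}^1(\R^d)$ and use Kantorovich duality together with the $c$-transform trick above to obtain a (near-)optimal convex $1$-Lipschitz $f$ with $C(\nu,\rho) = \int f\,d\nu + \int f^*\,d\rho$ (up to $\varepsilon$ if attainment fails). Weak duality applied with the same $f$ against $\mu$ gives $C(\mu,\rho) \le \int f\,d\mu + \int f^*\,d\rho$, hence
\begin{align*}
\int f\,d\nu - \int f\,d\mu \le C(\nu,\rho) - C(\mu,\rho) + \varepsilon,
\end{align*}
and taking the infimum over $f$ and sending $\varepsilon \to 0$ closes the loop.

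The main technical issues will be (a) justifying attainment (or $\varepsilon$-attainment) of the dual in $\mathcal{C}^1(\R^d)$ for possibly non-compactly supported $\mu,\nu$, which rests on the observation that $|\langle x,y\rangle|\le |x|$ on $\R^d\times B_1(0)$ is integrable against $\mu$ and $\nu$ by the finite first-moment assumption; and (b) the Borel measurable selection of $\partial f$, which exists by standard selection theorems since the subdifferential is closed-convex-valued with closed graph. The elegance of the statement then follows from the observation that each direction uses weak duality on the ``wrong'' measure combined with an explicit identification of the optimal coupling on the ``right'' one.
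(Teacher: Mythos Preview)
Your proposal is correct and takes a genuinely more direct route than the paper. Both arguments rest on Kantorovich duality for the cost $\langle x,y\rangle$ and both directions follow the same ``exact duality on one measure, weak duality on the other'' pattern; the difference is in how one forces the dual potential into $\mathcal{C}^1(\R^d)$ and how one manufactures $\rho$ from $f$. The paper first assumes $\mu,\nu\sim\lambda$ and invokes Brenier's theorem twice: once to identify $\rho=\nabla\hat f_*\nu$ for the optimal potential $\hat f$ of $C(\nu,\rho)$ (so that $\mathrm{supp}(\rho)\subseteq B_1(0)$ forces $\|\partial\hat f\|_\infty\le 1$), and once to certify that $(f,f^*)$ is optimal for $C(\mu,\hat\rho)$ with $\hat\rho=\nabla f_*\mu$. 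The general case is then recovered by approximating $\mu,\nu$ in $\mathcal{W}_1$ by absolutely continuous measures and checking uniform continuity of both sides of \eqref{eq:main}. Your argument bypasses both Brenier and the approximation: the restricted $c$-transform $\tilde\phi(x)=\sup_{y\in B_1(0)}(\langle x,y\rangle-\phi^*(y))$ lands in $\mathcal{C}^1(\R^d)$ by pure convex analysis (do record that $\tilde\phi^*=\phi^*$ on $B_1(0)$ since $\phi^*+\iota_{B_1(0)}$ is already lsc convex, so the $\rho$-integral is unchanged), and a Borel selection $T$ of $\partial f$ replaces $\nabla f$ without any regularity assumption on $\mu$. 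The payoff of your approach is a one-shot proof valid for arbitrary $\mu,\nu\in\mathcal{P}_1(\R^d)$; the paper's detour through Brenier, on the other hand, makes the link to transport maps explicit, which it later exploits in Section~\ref{sec:arbitrage} to read off the model-independent arbitrage $\nabla\hat f$ directly from the optimal coupling.
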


Theorem \ref{thm:main2}  states that convex order of $\mu$ and $\nu$ is equivalent to an order relation $C(\cdot, \cdot)$ on the space of probability measures. Contrary to standard characterizations of convex order using potential functions or cdfs,  it holds in any dimension and can be seen as a natural generalization of the following result:
\begin{corollary}\label{thm:main}
Denote the 2-Wasserstein metric by
\begin{align*}
\mathcal{W}_2 (\mu,\nu):= \inf_{\pi\in \Pi(\mu,\nu)} \sqrt{\int |x-y|^2\,\pi(dx,dy)}.
\end{align*}
If $\mu$ and $\nu$ have finite second moment, then they are in convex order if and only if 
\begin{align}\label{eq:main3}
 \mathcal{W}_2(\nu, \rho)^2- \mathcal{W}_2(\mu,\rho)^2\le \int |y|^2\,\nu(dy)- \int |x|^2\,\mu(dx)
\end{align}
holds for all probability measures $\rho$ on $\R^d$ with bounded support.
\end{corollary}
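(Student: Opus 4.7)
My plan is to reduce Corollary \ref{thm:main} directly to Theorem \ref{thm:main2} via an algebraic rewriting of $\mathcal{W}_2^2$ together with a scaling argument. First I would unpack $\mathcal{W}_2^2$ via the polarization identity $|x-y|^2=|x|^2+|y|^2-2\langle x,y\rangle$. Since the two marginal terms are constants under any $\pi\in\Pi(\mu,\rho)$, only the cross term depends on $\pi$, so the infimum in the Wasserstein problem matches the supremum defining $C$, yielding
\begin{align*}
\mathcal{W}_2(\mu,\rho)^2=\int|x|^2\,\mu(dx)+\int|y|^2\,\rho(dy)-2C(\mu,\rho)
\end{align*}
and the analogous identity for $\nu$. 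Subtracting, the target inequality \eqref{eq:main3} is exactly $C(\mu,\rho)\le C(\nu,\rho)$.

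Second, I would reconcile the support conditions. The quantifier ``for all $\rho$ with bounded support'' in the corollary and the restriction $\rho\in\mathcal{P}^1(\R^d)$ in Theorem \ref{thm:main2} are interchangeable via the scaling $\tilde\rho\coloneq(y\mapsto y/R)_\#\rho$: if $\mathrm{supp}(\rho)\subseteq B_R(0)$ then $\tilde\rho\in\mathcal{P}^1(\R^d)$ and $C(\mu,\tilde\rho)=C(\mu,\rho)/R$, so the inequality $C(\mu,\cdot)\le C(\nu,\cdot)$ transfers freely between the two classes. Thus this inequality for all bounded-support $\rho$ is equivalent to the RHS of \eqref{eq:main} being $\ge 0$; combined with the trivial upper bound $\le 0$ obtained by taking $\rho=\delta_0$, this says the RHS equals $0$. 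Theorem \ref{thm:main2} then forces the LHS also to equal $0$, i.e.\ $\int f\,d\mu\le \int f\,d\nu$ for every $f\in\mathcal{C}^1(\R^d)$.

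Finally I would upgrade this from $1$-Lipschitz convex test functions to arbitrary convex ones, which is the only step with any subtlety. Any Lipschitz convex $g$ reduces to the $1$-Lipschitz case by rescaling. For a general convex $g$ (with affine lower bound, so that the integrals lie in $(-\infty,\infty]$) I would introduce the Moreau envelope $g_k(x)=\inf_y\{g(y)+k|x-y|\}$, which is convex, $k$-Lipschitz, admits the same affine lower bound for all $k$ sufficiently large, and satisfies $g_k\nearrow g$ pointwise. Monotone convergence then transfers $\int g_k\,d\mu\le \int g_k\,d\nu$ to the limit, yielding $\mu\preceq_c\nu$. The reverse implication is immediate: if $\mu\preceq_c\nu$ then the LHS of \eqref{eq:main} is both $\ge 0$ and $\le 0$, hence equals $0$, and the chain of equivalences above reverses all the way back to \eqref{eq:main3}. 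The main obstacle is thus the Moreau-envelope approximation; the rest is routine bookkeeping once the $\mathcal{W}_2^2$ expansion is observed.
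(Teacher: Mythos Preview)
Your proposal is correct and follows essentially the same route as the paper: the $\mathcal{W}_2^2$ expansion via polarization, the scaling between $\mathcal{P}^1(\R^d)$ and all bounded-support measures, and the approximation of general convex functions by Lipschitz convex ones (your Moreau envelope makes explicit what the paper merely states as ``approximated pointwise from below''). The only organizational difference is that the paper first packages the scaling and approximation steps into the intermediate identity $\inf_{f\text{ convex}}(\int f\,d\nu-\int f\,d\mu)=\inf_{\rho\in\mathcal{P}^\infty(\R^d)}(C(\nu,\rho)-C(\mu,\rho))$ (Corollary~\ref{cor:1}) and then specializes to $\mathcal{W}_2^2$, whereas you run the chain of equivalences directly.
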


Corollary \ref{thm:main} itself has an interesting history. To the best of our knowledge, it was first stated in \cite{carlier2008remarks} for compactly supported measures $\mu, \nu$. His proof relies on a well-known connection between convex functions and OT for the squared Euclidean distance called Brenier's theorem (see \cite{brenier1991polar, ruschendorf1990characterization}) together with a certain probabilistic first-order condition, see \cite[Proposition 1]{carlier2008remarks}. We emphasize here, that contrary to the setting of Brenier's theorem, no assumptions on the probability measures $\mu$ and $\nu$ except for the compact support condition are made; in particular there is no need to assume that these are absolutely continuous wrt. the Lebesgue measure.

 Interestingly, Carlier's result does not seem to be very well-known in the literature on stochastic order. We conjecture that this is mainly due to his use of the french word ``balay\'ee" instead of convex order, so that the connection is not immediately apparent.  For this reason, one aim of this note is to popularize Carlier's result, making it accessible to a wider audience, while simultaneously showcasing potential applications. As it turns out, Corollary \ref{thm:main} is at least partially known to the mathematical finance community: indeed, the ``only if" direction of Corollary \ref{thm:main} was rediscovered in \cite[Equation (2.2)]{alfonsi2020squared} for (not necessarily compactly supported) probability measures $\mu,\nu$ with finite second moments.
 
Theorem \ref{thm:main2} differs from Carlier's work in three aspects: first, as the convex order is classically embedded in $\mathcal{P}_1(\R^d)$ and does not require moments of higher order or compact support assumptions (see e.g. \cite{nendel2020note}), Theorem \ref{thm:main2} is simultaneously more concise and arguably more natural than Corollary \ref{thm:main}. Second, our proof of Theorem \ref{thm:main2} (and thus also Corollary \ref{thm:main}) follows a different route than Carlier's original proof, who argues purely on the space probability measures (i.e. the ``primal side" in optimal transport). Instead, we combine Brenier's theorem with the theory of the classical optimal transport duality. Lastly, we discuss three implications of Theorem \ref{thm:main2}: we first give a proof of a characterization of convex order in one dimension through quantile functions. Then we use Theorem \ref{thm:main2} to derive new computational methods for testing convex order between $\mu$ and $\nu$. For the computation we exploit state of the art computational OT methods, which are efficient for potentially high-dimensional problems. These have recently seen a spike in research activity. We refer to \cite{peyre2019computational} for an overview. Finally we discuss applications of Theorem \ref{thm:main2} to the theory of so-called model-independent arbitrages, see \cite[Definition 1.2]{acciaio2013model}.

This article is structured as follows: in Section \ref{sec:dis} we state examples and consequences of Theorem \ref{thm:main2}. In particular we connect it to some well-known results in the theory of convex order. The proof of the main results is given in Section \ref{sec:proof}. Sections \ref{sec:num} and \ref{sec:arbitrage} discuss numerical and mathematical finance applications of Theorem \ref{thm:main2} respectively. Remaining proofs are collected in Section \ref{sec:rem}.

\section{Discussion and consequences of main results} \label{sec:dis}

To sharpen intuition, let us first discuss the case $d=1$. By Theorem \ref{thm:main2} we can obtain a new proof of a well-known representation of convex order on the real line, see e.g. \cite[Theorem 3.A.5]{shaked2007stochastic}. Here we denote the quantile function of a probability measure $\mu$ by 
\begin{align*}
F_\mu^{-1}(x):=\inf\{y\in \R: \ \mu((-\infty, y])\ge x\}.
\end{align*}

\begin{corollary}\label{cor:d1}
For $d=1$ we have 
\begin{align*}
\mu\preceq_c\nu &\Leftrightarrow \int_{0}^x [F^{-1}_\mu(y)-F_\nu^{-1}(y)]\,dy\ge 0.
\end{align*}
for all $x\in [0,1]$, with equality for $x=1$.
\end{corollary}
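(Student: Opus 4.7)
The plan is to reduce Corollary \ref{cor:d1} to a one-dimensional integral inequality via Theorem \ref{thm:main2}, and then to recognize that inequality as integration-by-parts against an arbitrary non-negative Stieltjes measure. In dimension one, Theorem \ref{thm:main2} (combined with the fact that convex order is equivalent to $\int f\,d\mu \le \int f\,d\nu$ for every $1$-Lipschitz convex $f$, after scaling) says $\mu \preceq_c \nu$ iff $C(\mu,\rho) \le C(\nu,\rho)$ for every $\rho$ supported in $[-1,1]$. The classical Fréchet--Hoeffding rearrangement in $\R$ identifies the maximizing coupling for the scalar-product cost as comonotone, giving
\begin{equation*}
C(\mu,\rho) = \int_0^1 F_\mu^{-1}(u)\,F_\rho^{-1}(u)\,du, \qquad C(\nu,\rho) = \int_0^1 F_\nu^{-1}(u)\,F_\rho^{-1}(u)\,du.
\end{equation*}
Hence the condition of Theorem \ref{thm:main2} rephrases as
\begin{equation*}
\int_0^1 \bigl[F_\mu^{-1}(u) - F_\nu^{-1}(u)\bigr]\,\phi(u)\,du \le 0
\end{equation*}
for every $\phi$ of the form $F_\rho^{-1}$ with $\rho \in \mathcal{P}^1(\R)$. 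The set of such $\phi$ is precisely the set of non-decreasing (left-continuous) functions $\phi \colon (0,1) \to [-1,1]$, since any such $\phi$ is the quantile function of its pushforward of Lebesgue measure on $(0,1)$.

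Next I would introduce $G(x) \coloneq \int_0^x [F_\mu^{-1}(y) - F_\nu^{-1}(y)]\,dy$, which is absolutely continuous and bounded on $[0,1]$ by the first-moment assumption. Integration by parts in Riemann--Stieltjes form gives
\begin{equation*}
\int_0^1 \bigl[F_\mu^{-1}(u) - F_\nu^{-1}(u)\bigr]\phi(u)\,du = \phi(1^-)\,G(1) - \int_{(0,1)} G(v)\,d\phi(v),
\end{equation*}
where $d\phi$ is the non-negative finite measure generated by $\phi$. Testing this identity against the constant choices $\phi \equiv \pm 1$ forces $G(1) = 0$ (equivalently, $\mu$ and $\nu$ share the same mean, as is well known for convex order). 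Testing then against the step functions $\phi = \mathbf{1}_{[x,1]}$, which arise from $\rho = x\delta_0 + (1-x)\delta_1 \in \mathcal{P}^1(\R)$, forces $-G(x) \le 0$, i.e.\ $G(x) \ge 0$ for every $x \in [0,1]$. Conversely, if $G \ge 0$ on $[0,1]$ and $G(1)=0$, then the displayed identity shows $\int [F_\mu^{-1}-F_\nu^{-1}]\phi\,du = -\int G\,d\phi \le 0$ for every non-decreasing $\phi$ into $[-1,1]$, since $d\phi$ is a non-negative measure. This closes the equivalence.

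The main obstacle is mostly bookkeeping: ensuring that the parameterization $\rho \leftrightarrow F_\rho^{-1}$ really sweeps out all non-decreasing $[-1,1]$-valued functions on $(0,1)$ (up to sets of Lebesgue measure zero, which is what matters for the integral), and that the boundary terms in the Stieltjes integration by parts are handled correctly when $\phi$ has jumps at $0$ or $1$. Both points are standard; once they are in place, the argument collapses to the simple observation that a continuous function $G$ on $[0,1]$ with $G(1)=0$ is non-negative iff $\int G\,d\nu \ge 0$ for every non-negative Borel measure $\nu$, which is the engine of the proof.
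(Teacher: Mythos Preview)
Your argument is correct and, in fact, more self-contained than the paper's. Both routes reach the same intermediate characterization: $\mu\preceq_c\nu$ if and only if $\int_0^1 g(u)\bigl[F_\nu^{-1}(u)-F_\mu^{-1}(u)\bigr]\,du\ge 0$ for every bounded non-decreasing $g$ on $(0,1)$, identifying such $g$ with quantile functions $F_\rho^{-1}$ of compactly supported $\rho$. The difference lies in how each side of the equivalence is connected. The paper imports this characterization from an external source (\cite{wang2020distortion}, Theorem~2 and Lemma~1), where it is stated as a distortion--risk-measure condition, and then links it to Corollary~\ref{thm:main} via the one-dimensional formula $\mathcal{W}_2(\cdot,\rho)^2=\int_0^1(F^{-1}_\cdot-F_\rho^{-1})^2$; the passage from ``testing against all increasing $g$'' to the pointwise inequality $\int_0^x[F_\mu^{-1}-F_\nu^{-1}]\,dy\ge 0$ is effectively delegated to that reference. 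You instead obtain the ``increasing $g$'' condition directly from Theorem~\ref{thm:main2} together with the Fr\'echet--Hoeffding identity $C(\mu,\rho)=\int_0^1 F_\mu^{-1}F_\rho^{-1}$, and then close the loop to the statement of Corollary~\ref{cor:d1} by an explicit Stieltjes integration by parts, testing against constants to force $G(1)=0$ and against step functions $\mathbf{1}_{(x,1)}$ to force $G(x)\ge 0$. Your approach avoids the external citation and makes the integration-by-parts step (which is really the heart of the one-dimensional reduction) transparent; the paper's approach has the virtue of tying the result to the $\mathcal{W}_2$-formulation of Corollary~\ref{thm:main}. One small cosmetic point: with the left-continuous quantile convention used in the paper, your step function is $\mathbf{1}_{(x,1)}$ rather than $\mathbf{1}_{[x,1]}$, but this changes nothing in the Lebesgue integral and still yields $d\phi=\delta_x$.
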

The proofs of all results of this section are collected in Section \ref{sec:rem}. We continue with general $d\in \N$ and give a geometric interpretation of Corollary \ref{thm:main} by restating it as follows: $\mu\preceq_c \nu$ holds iff
\begin{align}\label{eq:main2}
\mathcal{W}_2(\nu,\rho)^2-\mathcal{W}_2(\mu,\rho)^2\le \mathcal{W}_2(\nu, \delta_z)^2 -\mathcal{W}(\mu, \delta_z)^2
\end{align}
for all $\rho\in \mathcal{P}(\R^d)$ with bounded support, where $\delta_z$, $z\in \R^d$ is a Dirac measure. Indeed, varying $\rho$ over Dirac measures in \eqref{eq:main3} implies that the means of $\mu$ and $\nu$ have to be equal; equation \eqref{eq:main2} then follows from simple algebra. This implies in particular that the difference between squared Wasserstein cost from $\nu$ and $\mu$ to $\rho$ is maximised at the point masses. Lastly, Theorem \ref{thm:main2} can also be reformulated as: $\mu \preceq_c \nu$ iff
\begin{align*}
\sup_{\pi\in \Pi(\mu,\rho)} \int \langle x, z\rangle\,\pi(dx,dz) \le \sup_{\pi\in \Pi(\nu,\rho)} \int \langle y, z\rangle\,\pi(dy,dz),
\end{align*}
i.e. for any $\rho\in \mathcal{P}(\R^d)$ with bounded support, the maximal covariance between $\mu$ and $\rho$ is less than the one between $\nu$ and $\rho$. This provides a natural intuition for a classical pedestrian description of convex order, namely that ``$\nu$ being more spread out than $\mu$".

We next give a simple example for Corollary \ref{thm:main}.

\begin{example}
Let us take $\mu=\delta_0$ and $\nu$ with mean zero. Now, recalling \eqref{eq:was} and bounding $\mathcal{W}_2(\nu, \rho)$ from above by choosing the product coupling, we obtain that for any $\rho$ with finite second moment
\begin{align*} 
 \mathcal{W}_2(\nu, \rho)^2- \mathcal{W}_2(\mu,\rho)^2 &=  \mathcal{W}_2(\nu, \rho)^2- \int |x|^2\,\rho(dx)\\
&\le  \int |y|^2 \nu(dy) -\int 2\langle x, y\rangle \,\nu(dx)\rho(dy)\\
&=\int |y|^2 \nu(dy)\\
&= \int |y|^2\,\nu(dy)- \int |x|^2\,\mu(dx).
\end{align*}
In conclusion we recover the well-known fact $\delta_0\preceq_c \nu$.
\end{example}

We now state two direct corollaries of Corollary \ref{thm:main}. We consider the cost $c(x,y):=|x-y|^2/2$ and recall that a function $f$ is $c$-concave, if 
\begin{align*}
f(x)=\inf_{y\in \R^d} (g(y)-c(x,y))
\end{align*}
for some function $g:\R^d\to \R$. We then have the following:

\begin{corollary}\label{cor:c-concave}
We have
\begin{align*}
\int g\,d\nu \le \int g\,d\mu  \qquad \text{for all } c\text{-concave functions } g:\R^d\to \R
\end{align*}
if and only if
\begin{align*}
 \mathcal{W}_2(\nu, \rho)^2\le \mathcal{W}_2(\mu,\rho)^2\qquad \text{for all }\rho\in\mathcal{P}(\R^d)\text{ with compact support}.
\end{align*}
\end{corollary}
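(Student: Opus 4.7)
The plan is to reduce both sides of the equivalence to $\mu \preceq_c \nu$ via the explicit structure of $c$-concave functions under the quadratic cost, and then to conclude using Theorem \ref{thm:main2}. First I would record the Brenier-style reformulation: the identity
\[
g(x) + \tfrac{1}{2}|x|^2 = \inf_{y \in \R^d}\bigl(h(y) - \tfrac{1}{2}|y|^2 + \langle x, y\rangle\bigr)
\]
exhibits $g + \tfrac{1}{2}|\cdot|^2$ as concave (an infimum of affine functions of $x$), so that the class of $c$-concave functions coincides with $\{-\tfrac{1}{2}|x|^2 - \phi(x) : \phi \text{ convex, lsc, proper}\}$. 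Substituting $g = -\tfrac{1}{2}|\cdot|^2 - \phi$ into $\int g\,d\nu \le \int g\,d\mu$ and rearranging yields the equivalent statement
\[
\int \phi\,d\nu - \int \phi\,d\mu \;\ge\; \tfrac{1}{2}\Bigl(\int|x|^2\,d\mu - \int|y|^2\,d\nu\Bigr) \qquad \text{for all convex } \phi.
\]
The scaling $\phi \mapsto \lambda\phi$ with $\lambda \to \infty$ extracts $\mu \preceq_c \nu$, and applying convex order to the test function $|x|^2$ renders the right-hand side non-positive, delivering the converse direction.

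For the right-hand side of the corollary I would expand $\mathcal{W}_2(\mu,\rho)^2 = \int|x|^2\,d\mu + \int|z|^2\,d\rho - 2\,C(\mu,\rho)$ to recast the Wasserstein inequality as
\[
C(\nu,\rho) - C(\mu,\rho) \;\ge\; \tfrac{1}{2}\Bigl(\int|y|^2\,d\nu - \int|x|^2\,d\mu\Bigr)
\]
for every compactly supported $\rho$. Applying this system to the push-forward $\rho_t$ of $\rho$ under $z \mapsto tz$ (for which $C(\cdot,\rho_t) = t\,C(\cdot,\rho)$ while $\int|z|^2\,d\rho_t = t^2\int|z|^2\,d\rho$) and letting $t \to 0^+$ and $t \to \infty$ separates the additive second-moment constant from the covariance comparison; the equivalent formulation of Theorem \ref{thm:main2}, namely $\mu \preceq_c \nu$ iff $C(\mu,\rho) \le C(\nu,\rho)$ for all compactly supported $\rho$, then matches the right-hand side with the left-hand side.

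The hard part will be the careful bookkeeping of the additive second-moment constants: each side folds a $|x|^2$ piece together with a covariance-type inequality, and the scaling trick $\rho \mapsto \rho_t$ is required to peel them apart cleanly before Theorem \ref{thm:main2} can be applied.
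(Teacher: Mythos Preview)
Your scaling strategy does not close the loop, because the two second-moment constants you produce have \emph{opposite signs}. On the $c$-concave side your rearrangement gives
\[
\int \phi\,d\nu-\int \phi\,d\mu \ \ge\ \tfrac12\Bigl(\int|x|^2\,d\mu-\int|y|^2\,d\nu\Bigr),
\]
whose right-hand side is $\le 0$ once $\mu\preceq_c\nu$, so this side is equivalent to $\mu\preceq_c\nu$ alone. On the Wasserstein side, however, you obtain
\[
C(\nu,\rho)-C(\mu,\rho)\ \ge\ \tfrac12\Bigl(\int|y|^2\,d\nu-\int|x|^2\,d\mu\Bigr),
\]
and your own $t\to 0^{+}$ limit forces $\int|y|^2\,d\nu\le\int|x|^2\,d\mu$; combined with convex order this pins the second moments to be \emph{equal}. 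Thus the Wasserstein condition is strictly stronger than $\mu\preceq_c\nu$ (try $\mu=\delta_0$, $\nu=\tfrac12(\delta_{-1}+\delta_1)$, $\rho=\delta_0$), and your two reductions do not match. You have established RHS $\Rightarrow$ LHS but not the converse.

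The source of the mismatch is the sign in your $c$-concave parametrisation. The characterisation used in the paper's proof is the standard one for the quadratic cost, namely that $g$ is $c$-concave iff $f:=\tfrac12|x|^2-g$ is convex (so $g=\tfrac12|x|^2-f$, not $-\tfrac12|x|^2-\phi$). With this sign the constant on the $c$-concave side becomes $\tfrac12(\int|y|^2\,d\nu-\int|x|^2\,d\mu)$, identical to the Wasserstein constant. The paper then bypasses any scaling: it simply invokes the quantitative identity \eqref{eq:res},
\[
\inf_{f\text{ convex}}\Bigl(\int f\,d\nu-\int f\,d\mu\Bigr)=\inf_{\rho\in\mathcal P^\infty(\R^d)}\bigl(C(\nu,\rho)-C(\mu,\rho)\bigr),
\]
adds the common second-moment constant to both sides, and reads off
\[
\inf_{g\ c\text{-concave}}\Bigl(\int g\,d\mu-\int g\,d\nu\Bigr)=\tfrac12\inf_{\rho}\bigl(\mathcal W_2(\mu,\rho)^2-\mathcal W_2(\nu,\rho)^2\bigr),
\]
from which the equivalence is immediate. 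Your scaling device becomes unnecessary once the constants agree; if you insist on it, both sides then reduce to ``$\mu\preceq_c\nu$ and equal second moments'', and the argument goes through.
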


Corollary \ref{thm:main} also directly implies the following well-known result:
\begin{corollary}\label{cor:easy}
If $\mu\preceq_c \nu$ then $$\mathcal{W}_2(\mu,\nu)^2\le \int |y|^2\,\nu(dy)-\int|x|^2\,\mu(dx).$$
In particular $\mu\preceq_c \nu$ implies
\begin{align*}
\ \sup_{\pi\in \Pi(\mu,\nu)} \int \langle x,y\rangle \,\pi(dx,dy)\ge \int |x|^2\,\mu(dx).
\end{align*}
\end{corollary}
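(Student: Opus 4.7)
The plan is to apply Corollary~\ref{thm:main} with the specific choice $\rho = \mu$. Under this substitution the left-hand side of \eqref{eq:main3} becomes $\mathcal{W}_2(\nu,\mu)^2 - \mathcal{W}_2(\mu,\mu)^2 = \mathcal{W}_2(\mu,\nu)^2$, so the inequality collapses to exactly the desired
\[
\mathcal{W}_2(\mu,\nu)^2 \le \int |y|^2\,\nu(dy) - \int |x|^2\,\mu(dx).
\]
I may assume $\int|y|^2\,\nu(dy)<\infty$, since otherwise the right-hand side is $+\infty$ and the claim is trivial; convex order applied to the convex function $x\mapsto|x|^2$ then forces $\int|x|^2\,\mu(dx)\le \int|y|^2\,\nu(dy)<\infty$, so everything is well-defined.

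The one technicality is that Corollary~\ref{thm:main} requires $\rho$ to have bounded support. If $\mu$ is itself compactly supported we plug in directly. Otherwise, I would approximate $\mu$ by the normalized restrictions $\mu_n := \mu(\,\cdot\,\cap B_n(0))/\mu(B_n(0))$, which have support in $B_n(0)$. Dominated convergence gives $\mu_n\to\mu$ weakly together with $\int|x|^2\,\mu_n(dx)\to\int|x|^2\,\mu(dx)$, hence $\mu_n\to\mu$ in $\mathcal{W}_2$. Applying Corollary~\ref{thm:main} to each $\mu_n$ and letting $n\to\infty$, using continuity of $\mathcal{W}_2$ and the fact that $\mathcal{W}_2(\mu,\mu_n)\to 0$, extends the inequality to $\rho=\mu$ and completes the first claim.

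For the second claim, I would expand the squared Wasserstein distance via
\[
\mathcal{W}_2(\mu,\nu)^2 = \int |x|^2\,\mu(dx) + \int |y|^2\,\nu(dy) - 2\sup_{\pi\in\Pi(\mu,\nu)}\int \langle x,y\rangle\,\pi(dx,dy),
\]
substitute into the first inequality, and rearrange; the $\int|y|^2\,\nu(dy)$ terms cancel and one is left exactly with $\sup_{\pi}\int\langle x,y\rangle\,\pi(dx,dy)\ge\int|x|^2\,\mu(dx)$. The only mild obstacle in the whole argument is the compact-support approximation above, which is resolved by standard continuity of $\mathcal{W}_2$ under weak convergence with convergent second moments.
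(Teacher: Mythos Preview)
Your proposal is correct and follows exactly the paper's approach: set $\rho=\mu$ in Corollary~\ref{thm:main} for the first inequality, then use the expansion \eqref{eq:was} to rearrange into the second. In fact your argument is more careful than the paper's, which simply writes ``setting $\rho=\mu$'' without commenting on the bounded-support hypothesis on $\rho$; your approximation by the truncations $\mu_n$ fills precisely that small gap.
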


\section{Proof of Theorem \ref{thm:main2}}\label{sec:proof}

Let us start by setting up some notation. We denote the scalar product on $\R^d$ by $\langle \cdot, \cdot \rangle$. We write $|\cdot|$ for the Euclidean norm on $\R^d$. The ball in $\R^d$ around $x$ of radius $r>0$ will be denoted by $B_r(x)$. We write $\nabla f(x)$ for the derivative of a function $f:\R^d \to \R$ at a point $x\in \R^d$. We denote the $d$-dimensional Lebesgue measure by $\lambda$.

In order to keep this article self-contained, we summarise some properties of optimal transport at the beginning of this section, and refer to \cite[Chapter 2.1]{villani2003topics} for a more detailed treatment.\\
By definition we have for any $\rho\in \mathcal{P}_2(\R^d)$ that
\begin{align}\label{eq:was}
\mathcal{W}_2(\mu,\rho)^2=\int |x|^2\,\mu(dx)+\int |y|^2\,\rho(dy) -2\sup_{\pi\in \Pi(\mu,\rho)} \int \langle x,y\rangle\, \pi(dx,dy).
\end{align}
In this section we thus (re-)define the cost function $c(x,y):= \langle x, y \rangle$ and recall that the convex conjugate $f^*:\R^d\to \R\cup \{\infty\}$ of a function $f:\R^d\to \R$ is given by
\begin{align*}
f^*(y):=\sup_{x\in \R^d} \left( \langle y, x\rangle - f(x)\right).
\end{align*}
The subdifferential of a proper convex function $f:\R^d \to \R\cup\{\infty\}$ is defined as
\begin{align*}
\partial f(x):=\{y\in \R^d:\ f(x')-f(x)\ge \langle y, x'-x\rangle \text{ for all }x'\in \R^d\}.
\end{align*}
It is non-empty if $x$ belongs to the interior of the domain of $f$. We have
\begin{align}\label{eq:subdiff}
f(x)+f^*(y) -\langle x,y\rangle=0 \quad \Leftrightarrow \quad y\in \partial f(x).
\end{align}
Lastly we recall the duality
\begin{align}\label{eq:duality}
\begin{split}
C(\mu,\rho)&=\sup_{\pi\in \Pi(\mu,\rho)} \int \langle x,y\rangle\, \pi(dx,dy)\\
&= \inf_{f\oplus g \ge c} \int f\,d\mu+\int g\,d\rho\\
&= \inf_{f\oplus g \ge c, \ f,g \text{ proper, convex }} \int f\,d\mu+\int g\,d\rho
\end{split}
\end{align}
and the existence of an optimal pair $(f, f^*)$ of (lower semicontinuous, proper) convex conjugate functions. Replacing $\mu$ by $\nu$ in the display above, we obtain a similar duality for $C(\nu, \rho)$.

\subsection{Proof of Theorem \ref{thm:main2}: the equivalent case}

We first prove Theorem \ref{thm:main2} for measures $\mu,\nu$, which are equivalent to the $d$-dimensional Lebesgue measure $\lambda$, i.e. $\mu,\nu\sim\lambda$. As $\mu,\nu\in \mathcal{P}_1(\R^d)$, the domain of the optimising potential $f$ for $C(\mu,\rho)$ (resp. $C(\nu,\rho)$) is $\R^d$ in this case. Recall furthermore that
\begin{align}\label{eq:subdiff}
\partial f(x)=\overline{\text{Conv}}(\lim_{x_k\to x}\nabla f(x_k)),
\end{align}
see e.g. \cite[2.1.3.3)]{villani2003topics}.
We write $$\|\partial f \|_\infty:= \sup_{x\in \R^d}\sup_{y\in \partial f(x)} |y|.$$ \\
We now prove Theorem \ref{thm:main2} when $\mu,\nu\sim\lambda$:

\begin{proposition}\label{prop:1}
Assume $\mu,\nu\in \mathcal{P}_1(\R^d)$, $\mu,\nu\sim \lambda$. Recall 
\begin{align*}
\mathcal{P}^1(\R^d)=\{\rho\in \mathcal{P}(\R^d): \ \mathrm{supp}(\rho)\subseteq B_1(0)\}
\end{align*}
as well as the 1-Lipschitz convex functions
\begin{align*}
\mathcal{C}^1(\R^d)=\{f:\R^d\to \R \ \mathrm{ convex}, \|\partial f\|_{\infty}\le 1\}.
\end{align*}
Then we have
\begin{align*}
\inf_{f\in \mathcal{C}^1(\R^d)}\left(\int f\,d\nu - \int f\,d\mu \right)=\inf_{\rho\in \mathcal{P}^1(\R^d)} \left( C(\nu,\rho)-C(\mu,\rho)\right).
\end{align*}
\end{proposition}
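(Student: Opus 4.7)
The plan is to prove both inequalities in Proposition \ref{prop:1} by combining the Kantorovich duality \eqref{eq:duality} with the fact that, under $\mu,\nu\sim\lambda$, the gradient of a convex potential furnishes a natural candidate primal coupling.

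For the direction $\inf_{\rho\in\mathcal{P}^1(\R^d)}(C(\nu,\rho)-C(\mu,\rho))\le \inf_{f\in\mathcal{C}^1(\R^d)}(\int f\,d\nu-\int f\,d\mu)$, I would fix $f\in\mathcal{C}^1(\R^d)$ and take the canonical candidate $\rho:=(\nabla f)_\# \mu$. Because $\mu\sim\lambda$ and $f$ is convex, $\nabla f$ is well-defined $\mu$-a.e., and $\|\partial f\|_\infty\le 1$ forces $\mathrm{supp}(\rho)\subseteq B_1(0)$, so $\rho\in\mathcal{P}^1(\R^d)$. The Fenchel identity \eqref{eq:subdiff}, namely $\langle x,\nabla f(x)\rangle=f(x)+f^*(\nabla f(x))$, shows that the coupling $(\mathrm{id},\nabla f)_\#\mu$ realises both the primal lower bound and the dual upper bound for $C(\mu,\rho)$, pinching them together to the equality $C(\mu,\rho)=\int f\,d\mu+\int f^*\,d\rho$. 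Since $(f,f^*)$ is also dual-feasible for $C(\nu,\rho)$, weak duality gives $C(\nu,\rho)\le \int f\,d\nu+\int f^*\,d\rho$, and subtraction yields $C(\nu,\rho)-C(\mu,\rho)\le \int f\,d\nu-\int f\,d\mu$.

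For the opposite inequality, I would fix $\rho\in\mathcal{P}^1(\R^d)$ and invoke \eqref{eq:duality} to obtain a conjugate optimal pair $(g_\rho,g_\rho^*)$ for $C(\nu,\rho)$. One then modifies $g_\rho^*$ to equal $+\infty$ off $B_1(0)$ (this preserves $\int g_\rho^*\,d\rho$ since $\mathrm{supp}(\rho)\subseteq B_1(0)$) and replaces $g_\rho$ by the biconjugate of the new $g_\rho^*$; the new $g_\rho$ is pointwise smaller, so $\int g_\rho\,d\mu$ and $\int g_\rho\,d\nu$ can only decrease, and, crucially, the new $g_\rho$ is automatically finite and 1-Lipschitz on $\R^d$, hence in $\mathcal{C}^1(\R^d)$. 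The modified pair is still dual-feasible for $C(\nu,\rho)$ and cannot strictly improve the optimum, so it remains optimal, giving $C(\nu,\rho)=\int g_\rho\,d\nu+\int g_\rho^*\,d\rho$. Dual feasibility for $C(\mu,\rho)$ then yields $C(\mu,\rho)\le \int g_\rho\,d\mu+\int g_\rho^*\,d\rho$, so subtraction produces
$$C(\nu,\rho)-C(\mu,\rho)\ge \int g_\rho\,d\nu-\int g_\rho\,d\mu \ge \inf_{f\in\mathcal{C}^1(\R^d)}\left(\int f\,d\nu-\int f\,d\mu\right),$$
and infimising over $\rho$ closes the gap.

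The main obstacle is the reduction step in the $\ge$ direction: certifying that some optimal dual potential for $C(\nu,\rho)$ can be taken in $\mathcal{C}^1(\R^d)$. This rests on the observation that a proper lower semicontinuous convex function whose conjugate has bounded domain is automatically finite-valued and Lipschitz on all of $\R^d$. Along the way I must track finiteness of all relevant integrals, which follows from $\mu,\nu\in\mathcal{P}_1(\R^d)$ combined with the 1-Lipschitz property of the potential and from the finiteness of $C(\nu,\rho)$ (itself a consequence of $\mathrm{supp}(\rho)$ being bounded).
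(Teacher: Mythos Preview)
Your argument is correct. The $\le$ direction is essentially the paper's: you both fix $f\in\mathcal{C}^1(\R^d)$, set $\rho=(\nabla f)_\#\mu$ (using $\mu\sim\lambda$), and exploit optimality of $(f,f^*)$ for $C(\mu,\rho)$; you verify this optimality by the Fenchel equality and a primal--dual pinching, while the paper simply cites Brenier's theorem for the same conclusion.

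For the $\ge$ direction, however, your route is genuinely different. The paper fixes $\rho\in\mathcal{P}^1(\R^d)$, takes an optimal conjugate pair $(\hat f,\hat g)$ for $C(\nu,\rho)$, and then invokes \emph{Brenier's theorem} (this is where the hypothesis $\nu\sim\lambda$ enters) to obtain $\rho=(\nabla\hat f)_\#\nu$; the bound $\|\partial\hat f\|_\infty\le 1$ is then read off from $\mathrm{supp}(\rho)\subseteq B_1(0)$ via the representation \eqref{eq:subdiff} of the subdifferential as the closed convex hull of limits of gradients. You instead stay entirely within convex analysis: restricting the conjugate potential to have effective domain inside $B_1(0)$ forces its Legendre transform to be finite and $1$-Lipschitz on all of $\R^d$, hence in $\mathcal{C}^1(\R^d)$. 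This bypasses Brenier completely for this half of the proof and, notably, does not use $\nu\sim\lambda$ at all; your argument would therefore already cover general $\nu\in\mathcal{P}_1(\R^d)$ and shorten the subsequent $\mathcal{W}_1$-approximation the paper carries out to remove the equivalence assumptions. One small terminological slip: where you write ``biconjugate of the new $g_\rho^*$'' you mean its \emph{conjugate} $h^*$ (with $h$ the modified $g_\rho^*$); the biconjugate $h^{**}$ would just return $h$.
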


\begin{proof}
As $\mu,\nu$ have finite first moment and $\rho$ is compactly supported, $|C(\mu,\rho)|,|C(\nu, \rho)|<\infty$ follows from H\"older's inequality.
We now fix $\rho\in \mathcal{P}^1(\R^d)$ and take an optimal convex pair $(\hat{f},\hat{g})$ in \eqref{eq:duality} for $C(\nu,\rho)$. %
Next we apply Brenier's theorem in the form of \cite[Theorem 2.12]{villani2003topics}), which states that $\rho=\nabla \hat{f}(x)_*\nu$.\footnote{We note that the result is stated under the additional requirement that $\nu, \rho \in \mathcal{P}_2(\R^d)$. However as $\rho$ is supported on the unit ball, it can be checked that the arguments of \cite[proof of Theorem 2.9]{villani2003topics} (in particular boundedness from below) carry over, when simply adding $|x|$ to the potential $\hat{f}$ instead of adding $|x|^2/2$ to $\hat{f}$ and $|y|^2/2$ to $\hat{g}$.}
Furthermore, as $\text{supp}(\rho)\subseteq B_1(0)$ we conclude $\|\partial \hat{f}\|_{\infty}\le 1$ by \eqref{eq:subdiff} and
\begin{align*}
C(\nu, \rho)-C(\mu, \rho)&\ge \int \hat{f}\,d\nu+\int \hat{g}\,d\rho
-\left(\int \hat{f}\,d\mu+ \int \hat{g}\,d\rho\right)\\
&=\int \hat{f}\,d\nu-\int \hat{f}\,d\mu\\
&\ge \inf_{f \in \mathcal{C}^1(\R^d)} \left(\int f\,d\nu-\int f\,d\mu\right).
\end{align*}
Taking the infimum over $\rho\in \mathcal{P}^1(\R^d)$ shows that 
\begin{align*}
\inf_{\rho\in \mathcal{P}^1(\R^d)} \left( C(\nu, \rho)- C(\mu,\rho)\right)\ge \inf_{f \in \mathcal{C}^1(\R^d)}\left(\int f\,d\nu - \int f\,d\mu \right).
\end{align*}
On the other hand, fix $f\in \mathcal{C}^1(\R^d)$ and set $g:=f^*$. Define $\hat{\rho}:=\nabla f_*\mu$ and note that $\hat{\rho}\in \mathcal{P}^1(\R^d)$. Then again by Brenier's theorem we obtain optimality of the pair $(f,g)$ for $C(\mu,\hat{\rho})$, and thus
\begin{align*}
\int f\,d\nu- \int f\,d\mu  
 &= \left( \int f\,d\nu+\int g\,d\hat{\rho}\right) -\left( \int g\,d\hat{\rho} +\int f\,d\mu\right)\\
&\ge C(\nu, \hat{\rho})- C(\mu, \hat{\rho})\\
&\ge \inf_{\rho\in \mathcal{P}^1(\R^d)} \left( C(\nu, \rho)- C(\mu, \rho)\right).
\end{align*}
Taking the infimum over $f\in \mathcal{C}^1(\R^d)$ shows
\begin{align*}
\inf_{f \in \mathcal{C}^1(\R^d)}\left(\int f\,d\nu - \int f\,d\mu \right)\ge \inf_{\rho\in \mathcal{P}^1(\R^d)} \left( C(\nu,\rho)-C(\mu,\rho)\right).
\end{align*}
This concludes the proof.
\end{proof}

\subsection{Proof of Theorem \ref{thm:main2}: the general case}

We now prove Theorem \ref{thm:main2} for general measures $\mu,\nu\in \mathcal{P}_1(\R^d)$ through approximation in the $1$-Wasserstein sense.

\begin{proof}[Proof of Theorem \ref{thm:main2}]
Let us take sequences of $(\mu_n)_{n\in \N}$, $(\nu_n)_{n\in \N}$ in $\mathcal{P}_1(\R^d)$ satisfying
\begin{align*}
\lim_{n\to \infty} \mathcal{W}_1(\mu, \mu_n)=0=\lim_{n\to \infty} \mathcal{W}_1(\nu, \nu_n), \qquad \mu_n, \nu_n\sim \lambda \text{ for all } n\in \N,
\end{align*}
where $\mathcal{W}_1$ denotes the $1$-Wasserstein distance.
Recall that $\mathcal{C}^1(\R^d)$ denotes the set of convex $1$-Lipschitz functions. Thus, e.g. by the Kantorovich-Rubinstein formula (\cite[(5.11)]{villani2008optimal}),
\begin{align}\label{eq:aux1}
\lim_{n\to \infty} \sup_{f\in \mathcal{C}^1(\R^d)} \left| \int f d\mu-\int f d\mu_n\right|\le \lim_{n\to \infty}\mathcal{W}_1(\mu,\mu_n)=0.
\end{align}
The same holds for $(\nu_n)_{n\in \N}$ and $\nu$. Next, take an optimal coupling $\pi=\pi(dx,dy)$ for $C(\nu,\rho)$ and an optimal coupling $\pi^n=\pi^n(dx,dz)$ for $\mathcal{W}_1(\nu,\nu_n)$. Then $\hat{\pi}^n(dy,dz):= \int \pi^n(dx,dz)\pi_x(dy)$ is a coupling of $\rho$ and $\nu_n$. Furthermore, as $|y|\le 1$ $\rho$-a.s. we have
\begin{align*}
\begin{split}
C(\nu,\rho)-C(\nu_n,\rho)&\le \left|\int \langle y,x-z\rangle \,\pi^n(dx,dz)\pi_x(dy)\right|\\
&\le \int |x-z|\,\pi^n(dx,dz) \\
&\le \mathcal{W}_1(\nu_n,\nu).
\end{split}
\end{align*}
Exchanging the roles of $\nu$ and $\nu_n$ then yields
\begin{align*}
|C(\nu,\rho)-C(\nu_n,\rho)|\le \mathcal{W}_1(\nu_n,\nu).
\end{align*}
As the rhs is independent of $\rho\in \mathcal{P}^1(\R^d)$ this shows
\begin{align}\label{eq:aux2}
\lim_{n\to \infty} \sup_{\rho\in \mathcal{P}^1(\R^d)}\left|C(\nu,\rho)-C(\nu_n,\rho)\right|=0.
\end{align}
A similar argument holds for $(\mu_n)_{n\in \N}$ and $\mu$. We can now write
\begin{align*}
\inf_{f\in \mathcal{C}^1(\R^d)}\left(\int f\,d\nu - \int f\,d\mu \right)
&=\inf_{f\in \mathcal{C}^1(\R^d)}\Bigg[\left(\int f\,d\nu_n - \int f\,d\mu_n\right) \\
&+ \left(\int f\,d\nu- \int f\,d\nu_n \right)
-\left(\int f\,d\mu - \int f\,d\mu_n \right)\Bigg]
\end{align*}
and 
\begin{align*}
\inf_{\rho\in \mathcal{P}^1(\R^d)} \big( C(\nu,\rho)-C(\mu,\rho)\big)
&= \inf_{\rho\in \mathcal{P}^1(\R^d)} \Bigg[ \big(C(\nu_n,\rho)-C(\mu_n,\rho)\big)\\
&+  \big( C(\nu,\rho)-C(\nu_n,\rho)\big)
-   \big( C(\mu,\rho)-C(\mu_n,\rho)\big).\Bigg]
\end{align*}
Applying Proposition \ref{prop:1}, taking $n\to \infty$ and using \eqref{eq:aux1}, \eqref{eq:aux2} then concludes the proof.
\end{proof}

\subsection{Proof of Corollary \ref{thm:main}} We now detail the proof of Corollary \ref{thm:main}. We start with a preliminary result, which is an immediately corollary of Theorem \ref{thm:main2}.

\begin{corollary}\label{cor:1}
Assume $\mu,\nu\in \mathcal{P}_1(\R^d)$.
Then we have
\begin{align}\label{eq:res}
\inf_{f\ \mathrm{ convex}}\left(\int f\,d\nu - \int f\,d\mu \right)=\inf_{\rho\in \mathcal{P}^\infty (\R^d)} \left( C(\nu,\rho)-C(\mu,\rho)\right),
\end{align}
where $\mathcal{P}^\infty(\R^d)$ denotes the set of probability measures with bounded support.
In particular
\begin{align*}
\int f\,d\mu \le  \int f\,d\nu \qquad\text{for all convex functions }f:\R^d\to \R
\end{align*}
if and only if
\begin{align*}
C(\mu,\rho)\le C(\nu,\rho)\qquad \text{for all }\rho\in \mathcal{P}^\infty(\R^d).
\end{align*}
\end{corollary}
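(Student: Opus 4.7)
The plan is to bootstrap Theorem \ref{thm:main2} to Corollary \ref{cor:1} in two moves: a homogeneity argument that lifts the ``size one'' normalization to arbitrary radii, and a regularization argument that replaces Lipschitz convex test functions by arbitrary convex ones.

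For the first move, for each $R>0$ let $\mathcal{C}^R(\R^d)$ denote the $R$-Lipschitz convex functions and $\mathcal{P}^R(\R^d)$ the probability measures supported in $B_R(0)$. The maps $f\mapsto f/R$ and $\rho\mapsto (y\mapsto y/R)_*\rho$ are bijections $\mathcal{C}^R\to \mathcal{C}^1$ and $\mathcal{P}^R\to \mathcal{P}^1$ respectively, and the latter scales the cost as $C(\mu,(y\mapsto y/R)_*\rho)=R^{-1}C(\mu,\rho)$. Substituting both sides of Theorem \ref{thm:main2} and multiplying through by $R$ gives the ``radius-$R$'' version
\begin{align*}
\inf_{f\in \mathcal{C}^R(\R^d)}\left(\int f\,d\nu-\int f\,d\mu\right)=\inf_{\rho\in \mathcal{P}^R(\R^d)}\bigl(C(\nu,\rho)-C(\mu,\rho)\bigr).
\end{align*}
Both sides are monotone non-increasing in $R$; letting $R\to\infty$ and noting that $\bigcup_{R>0}\mathcal{P}^R=\mathcal{P}^\infty(\R^d)$ while $\bigcup_{R>0}\mathcal{C}^R$ exhausts the Lipschitz convex functions yields the desired right-hand side of \eqref{eq:res}, with the left-hand side so far restricted to Lipschitz convex $f$.

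The second move enlarges the function class to all convex $f$. The inclusion $\ge$ is trivial. For $\le$, given any convex $f$ with $\int f\,d\mu<\infty$, I would form the Moreau-type regularization
\begin{align*}
f_R(x):=\inf_{y\in \R^d}\bigl(f(y)+R|x-y|\bigr),
\end{align*}
which by standard inf-convolution facts is convex and $R$-Lipschitz, dominated by $f$, and increases pointwise to $f$ as $R\to\infty$ (the minimizer is confined near $x$ once $R$ exceeds the slope of an affine minorant of $f$, after which continuity of $f$ gives $f_R(x)\to f(x)$). Subtracting an affine minorant of $f$ makes the integrands non-negative, and Beppo Levi yields $\int f_R\,d\nu-\int f_R\,d\mu\to \int f\,d\nu-\int f\,d\mu$, completing \eqref{eq:res}. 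The ``in particular'' part then follows because both infima are $\le 0$ (take $f\equiv 0$ and $\rho=\delta_0$), so ``$\ge 0$'' is equivalent to ``$=0$'' on each side, which is precisely the pointwise inequality for every test object.

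The step I expect to require the most care is the regularization: $\int f\,d\nu$ may well equal $+\infty$, so exchanging the limit with the integral is not automatic. The affine-minorant trick is what reduces the argument to Beppo Levi for non-negative integrands; the scaling step, by contrast, is essentially bookkeeping.
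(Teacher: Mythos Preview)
Your proof is correct and follows essentially the same route as the paper's: scale Theorem \ref{thm:main2} from radius $1$ to radius $R$ (the paper simply multiplies \eqref{eq:main} by $k$; you phrase it via the bijections $f\mapsto f/R$ and $\rho\mapsto (y\mapsto y/R)_*\rho$, which amounts to the same thing), let $R\to\infty$, and then pass from Lipschitz convex to all convex test functions by monotone approximation from below. The only difference is granularity: the paper just asserts that any convex function is a pointwise increasing limit of Lipschitz convex functions, while you make this explicit via the inf-convolution $f_R(x)=\inf_y\bigl(f(y)+R|x-y|\bigr)$ and justify the limit interchange by subtracting an affine minorant and applying Beppo Levi.
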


\begin{proof}
Multiplying both sides of \eqref{eq:main} by $k>0$ yields
\begin{align*}
\inf_{f\in \mathcal{C}^k(\R^d)}\left(\int f\,d\nu - \int f\,d\mu \right)=\inf_{\rho\in \mathcal{P}^k(\R^d)} \left( C(\nu,\rho)-C(\mu,\rho)\right)
\end{align*}
with the definitions
\begin{align*}
\mathcal{P}^k(\R^d)=\{\rho\in \mathcal{P}(\R^d): \ \text{supp}(\rho)\subseteq B_k(0)\}
\end{align*}
and
\begin{align*}
\mathcal{C}^k(\R^d):=\{f:\R^d\to \R \text{ convex}, \|\partial f\|_{\infty}\le k\}.
\end{align*}
Taking $k\to \infty$ we obtain
\begin{align*}
\inf_{f\text{ convex, Lipschitz}}\left(\int f\,d\nu - \int f\,d\mu \right)=\inf_{\rho\in \mathcal{P}^\infty(\R^d)} \left( C(\nu,\rho)-C(\mu,\rho)\right).
\end{align*}
Lastly, any convex function $f:\R^d\to \R$ can be approximated pointwise from below by convex Lipschitz functions. Thus 
\begin{align*}
\inf_{f\ \mathrm{ convex}}\left(\int f\,d\nu - \int f\,d\mu \right)=\inf_{\rho\in \mathcal{P}^\infty(\R^d)} \left( C(\nu,\rho)-C(\mu,\rho)\right).
\end{align*}
The claim thus follows.
\end{proof}

\begin{remark}
If $\mu,\nu \in \mathcal{P}_p(\R^d)$ for some $p\ge 1$, then by H\"older's inequality and density of finitely supported measures in the $q$-Wasserstein space we also obtain 
\begin{align*}
\inf_{f\ \mathrm{ convex}}\left(\int f\,d\nu - \int f\,d\mu \right)=\inf_{\rho\in \mathcal{P}_q(\R^d)} \left( C(\nu,\rho)-C(\mu,\rho)\right),
\end{align*}
where $1/p+1/q=1$.
\end{remark}

\begin{proof}[Proof of Corollary \ref{thm:main}]
Recall from \eqref{eq:was} that
\begin{align*}
C(\mu,\rho)&= \frac{1}{2}\left( \int |x|^2\,\mu(dx)+\int |z|^2 \,\rho(dz) - \mathcal{W}_2(\mu,\rho)^2\right),\\
C(\nu,\rho)&= \frac{1}{2} \left(\int |y|^2 \,\nu(dy)+\int |z|^2\,\rho(dz) - \mathcal{W}_2(\nu,\rho)^2\right).
\end{align*}
Combining this with \eqref{eq:res} from Corollary \ref{cor:1} yields
\begin{align*}
&\inf_{f\text{ convex}}\left(\int f\,d\nu - \int f\,d\mu \right)=\inf_{\rho\in \mathcal{P}^\infty(\R^d)} \left( C(\nu,\rho)-C(\mu,\rho)\right)\\
&=\frac{1}{2} \inf_{\rho\in \mathcal{P}^\infty(\R^d)} \Big( \int |y|^2 \,\nu(dy)+\int |z|^2 \,\rho(dz) - \mathcal{W}_2(\nu,\rho)^2\\
&\qquad\qquad\qquad\qquad-\int |x|^2 \,\mu(dx)-\int |z|^2\,\rho(dz) + \mathcal{W}_2(\mu,\rho)^2\Big)\\
&=\frac{1}{2} \inf_{\rho\in \mathcal{P}^\infty(\R^d)} \Big(\mathcal{W}_2(\mu,\rho)^2 -\mathcal{W}_2(\nu,\rho)^2 +\int |y|^2\,\nu(dy) -\int|x|^2 \,\mu(dx) \Big).
\end{align*}
Thus
\begin{align*}
&\int f\,d\mu \le \int f\,d\nu \qquad \text{for all convex functions } f:\R^d\to \R\\
\Leftrightarrow &\inf_{f\text{ convex}}\left(\int f\,d\nu - \int f\,d\mu \right)\ge 0\\
\Leftrightarrow &\inf_{\rho\in \mathcal{P}^\infty(\R^d)} \Big(\mathcal{W}_2(\mu,\rho)^2 -\mathcal{W}_2(\nu,\rho)^2\Big)\ge \int |x|^2 \,\mu(dx)-\int |y|^2\,\nu(dy)\\
\Leftrightarrow &\sup_{\rho\in \mathcal{P}^\infty(\R^d)} \Big(\mathcal{W}_2(\nu,\rho)^2 -\mathcal{W}_2(\mu,\rho)^2\Big)\le \int |y|^2 \,\nu(dy)-\int |x|^2\,\mu(dx).
\end{align*}
The claim follows.
\end{proof}

\section{Numerical examples}\label{sec:num}

In this section we illustrate Theorem \ref{thm:main2} numerically. We focus on the following toy examples, where convex order or its absence is easy to establish:

\begin{example}\label{ex:1}
$\mu=\mathcal{N}(0, \sigma^2 I)$ and $\nu=\mathcal{N}(0, I)$ for $\sigma^2\in [0,2]$ for $d=1,2$.
\end{example}

\begin{example}\label{ex:2}
$\mu= \frac{1}{2} \left( \delta_{-1-s} +\delta_{1+s}\right)$ and $\nu=\frac{1}{2} \left( \delta_{-1}+\delta_{1} \right)$ for $s\in [-1,1]$.
\end{example}

\begin{example}\label{ex:3}
$$\mu= \frac{1}{4} \left( \delta_{(-1-s, 0)} +\delta_{(1+s, 0)}+ \delta_{(0, 1+s)}+ \delta_{(0,-1-s)}\right)$$ and 
$$\nu=\frac{1}{4}\left( \delta_{(-1, 0)} +\delta_{(1, 0)}+ \delta_{(0, 1)}+ \delta_{(0,-1)}\right)$$ for $s\in [-1,1]$.
\end{example}

A general numerical implementation for testing convex order of the two measures $\mu,\nu\in \mathcal{P}_2(\R^d)$ in general dimensions and the examples discussed here can be found in the Github repository \href{https://github.com/johanneswiesel/Convex-Order}{https://github.com/johanneswiesel/Convex-Order}. In the implementation we use the POT package (\href{https://pythonot.github.io}{https://pythonot.github.io}) to compute optimal transport distances. 

Let us set
\begin{align*}
V(\mu,\nu):=\inf_{\rho\in \mathcal{P}^1(\R^d)} \left( C(\nu,\rho)-C(\mu,\rho)\right)
\end{align*}
and note that by Theorem \ref{thm:main2} we have the relationship
\begin{align*}
\mu\preceq_c \nu \quad \Leftrightarrow \quad V(\mu,\nu)\ge 0.
\end{align*}
Clearly the computation of $V(\mu,\nu)$ hinges on the numerical exploration of the convex set of probability measures $\mathcal{P}^1(\R^d)$. We propose two methods for this: our first method only considers finitely supported measures $\rho$, which are dense in $\mathcal{P}^1(\R^d)$ in the Wasserstein topology. It relies on the \textit{Dirichlet distribution} on the space $R^{g-1}$, $g\in \N$, with density
\begin{align*}
f(x_1, \dots, x_g; \alpha_1, \dots, \alpha_g)=\frac{1}{B(\alpha)} \prod_{i=1}^g x_i^{\alpha_i-1}
\end{align*}
for $x_1, \dots, x_g\in [0,1]$ satisfying $\sum_{i=1}^g x_i=1$. Here $\alpha_1,\dots, \alpha_g>0$, $\alpha:=(\alpha_1, \dots, \alpha_g)$ and $B(\alpha)$ denotes the Beta function. Fixing $g$ grid points $\{k_1, \dots, k_g\}$ in $B_1(0)$, we can consider any realization of a Dirichlet random variable $(X_1, \dots, X_g)$ as a probability distribution assigning probability mass $X_i$ to the grid point $k_i$, $i\in \{1, \dots, g\}$. This leads to the following algorithm:

\begin{algorithm}
\caption{Basic algorithm for Indirect Dirichlet method}\label{alg:1}
\begin{algorithmic}
\Require{probability measures $\mu$, $\nu$, maximal number of evaluations $N$, number of grid points $g$.}
\Ensure{$V(\mu,\nu)$}
\Statex
Generate a grid $G$ of $B_1(0)$ of $g$ equidistant points and consider Dirichlet random variables modelling $\rho$ supported on $G$. Use Bayesian optimization to solve
\begin{align*}
\inf \,[C(\rho, \nu)-C(\rho,\mu)]
\end{align*}
over the set of Dirichlet distributions on $\R^{g-1}$. Terminate after $N$ steps.
\State \Return  $\inf \,C(\rho, \nu)-C(\rho,\mu).$
\end{algorithmic}
\end{algorithm}

The main computational challenge in Algorithm \ref{alg:1} is the efficient evaluation of $C(\rho, \nu)$ and $C(\rho,\mu)$. For this we aim to write $C(\rho,\nu)$ and $C(\rho,\mu)$ as linear programs. We offer two different variants of Algorithm \ref{alg:1}:

\begin{itemize}
\item \textit{Indirect Dirichlet method with histograms:} If we have access to finitely supported approximations $\textbf{a}$ and $\textbf{b}$ of $\mu$ and $\nu$ respectively and the measure $\rho$ is supported on $G$ as above, then we solve the linear programs $C(\textbf{a},\rho)$ and $C(\textbf{b}, \rho)$ as is standard in optimal transport theory.
\item \textit{Indirect Dirichlet method with samples:} here we draw a number of samples from $\mu$ and $\nu$ respectively and denote the respective empirical distributions of these samples by $\textbf{a}$ and $\textbf{b}$. As before we assume that we have access to a probability measure $\rho$ supported on $G$. We then solve the linear programs $C(\textbf{a},\rho)$ and $C(\textbf{a}, \rho)$. 
\end{itemize}

An alternative to Algorithm \ref{alg:1} is to directly draw samples from a distribution $\rho \in \mathcal{P}^1(\R^d)$. We call this the \textit{Direct randomized Dirichlet method}, see Algorithm \ref{alg:2} below.\\

\begin{algorithm}[H]
\caption{Direct randomized Dirichlet method}\label{alg:2}
\begin{algorithmic}
\Require{probability measures $\mu$, $\nu$, maximal number of evaluations $N$.}
\Ensure{$V(\mu,\nu)$}
\Statex
Draw samples from $\mu$ and $\nu$ and denote the empirical distributions of these samples by $\textbf{a}$ and $\textbf{b}$ respectively. Draw samples from a Dirichlet distribution and randomize their signs, under the constraint that the empirical distribution $\rho$ of these samples is an element of $\mathcal{P}^1(\R^d)$.
Use Bayesian optimization to solve
\begin{align*}
\inf \,[C(\rho, \nu)-C(\rho,\mu)]
\end{align*}
over the set of these distributions. Terminate after $N$ steps.
\State \Return minimal value of $\inf\,[C(\rho, \mu)-C(\rho,\nu)].$
\end{algorithmic}
\end{algorithm}

We refer to the github repository for a more detailed discussion, in particular for the implementation and further comments. For each example stated at the beginning of this section and each pair $(\mu,\nu)$ we plot $V(\mu,\nu)$ for the three methods discussed above, see Figures \ref{fig:1} and \ref{fig:2}.

\begin{figure}[h!]
\begin{center}
\begin{minipage}{0.44\textwidth}
\includegraphics[scale=0.22]{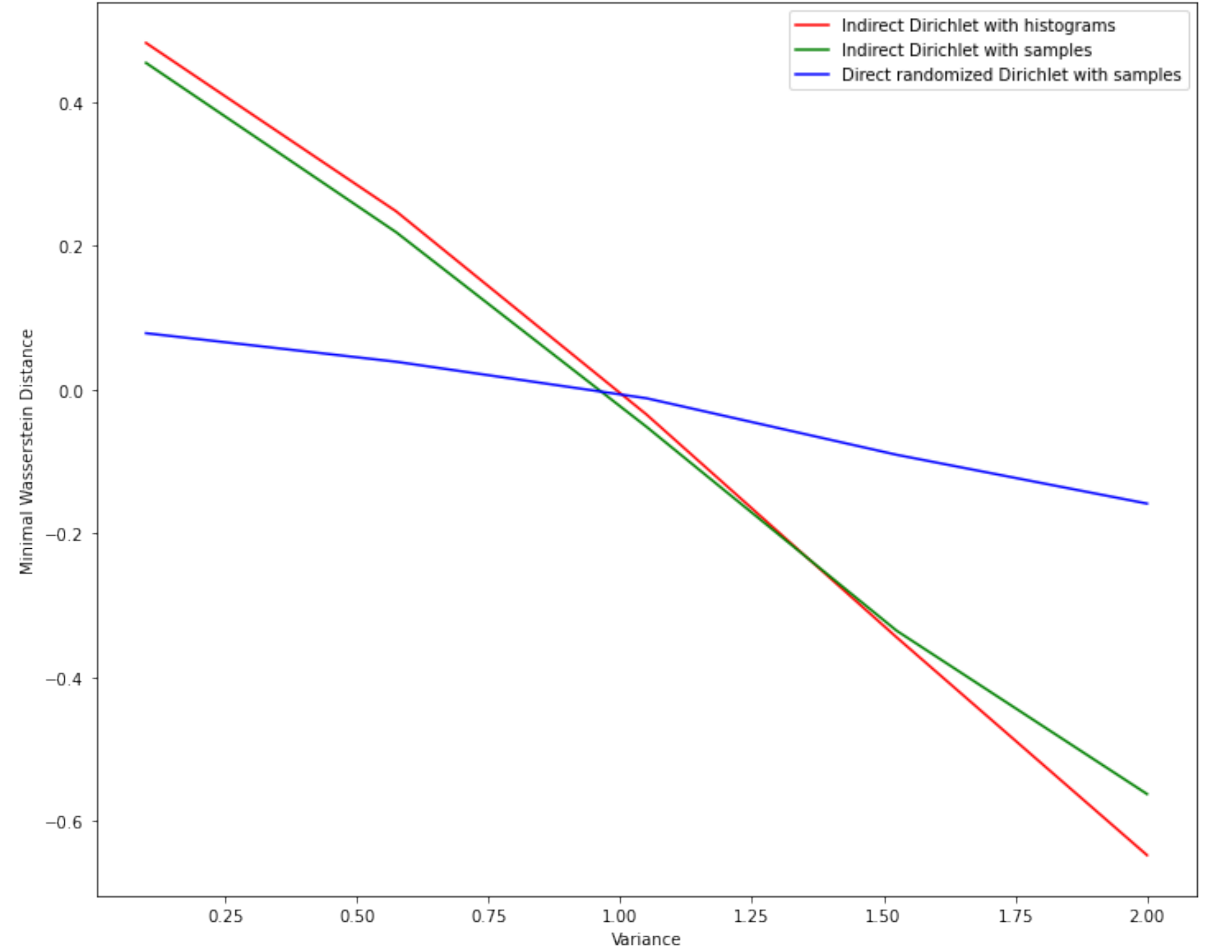}
\end{minipage}
\quad
\begin{minipage}{0.44\textwidth}
\includegraphics[scale=0.22]{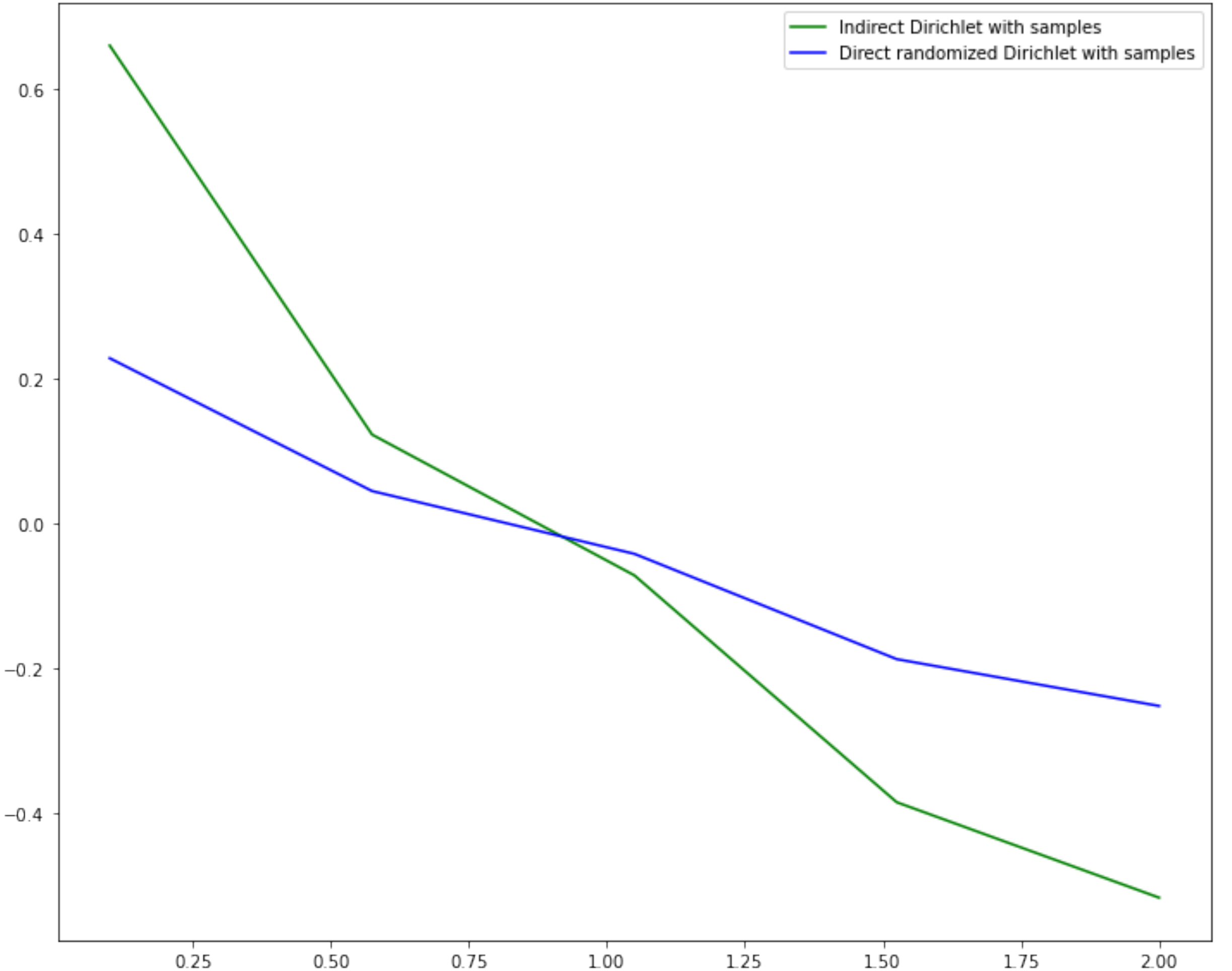}
\end{minipage}
\caption{Values of different estimators of $V(\mu,\nu)$ plotted against $\sigma$ for Example \ref{ex:1}. Both plots use $N=100$ samples.}\label{fig:1}
\end{center}
\end{figure}

\begin{figure}[h!]
\begin{center}
\begin{minipage}{0.44\textwidth}
\includegraphics[scale=0.22]{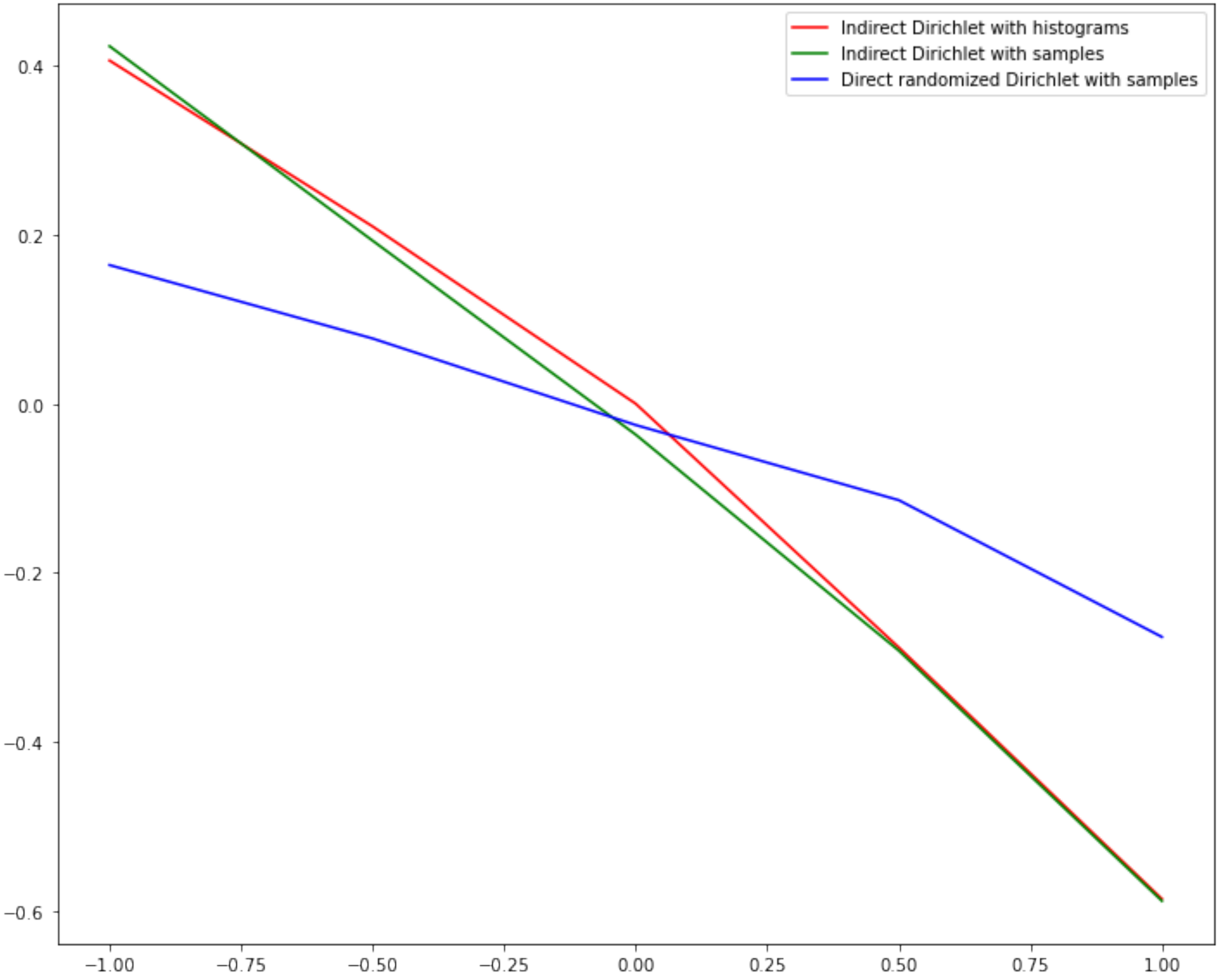}
\end{minipage}
\quad
\begin{minipage}{0.44\textwidth}
\includegraphics[scale=0.22]{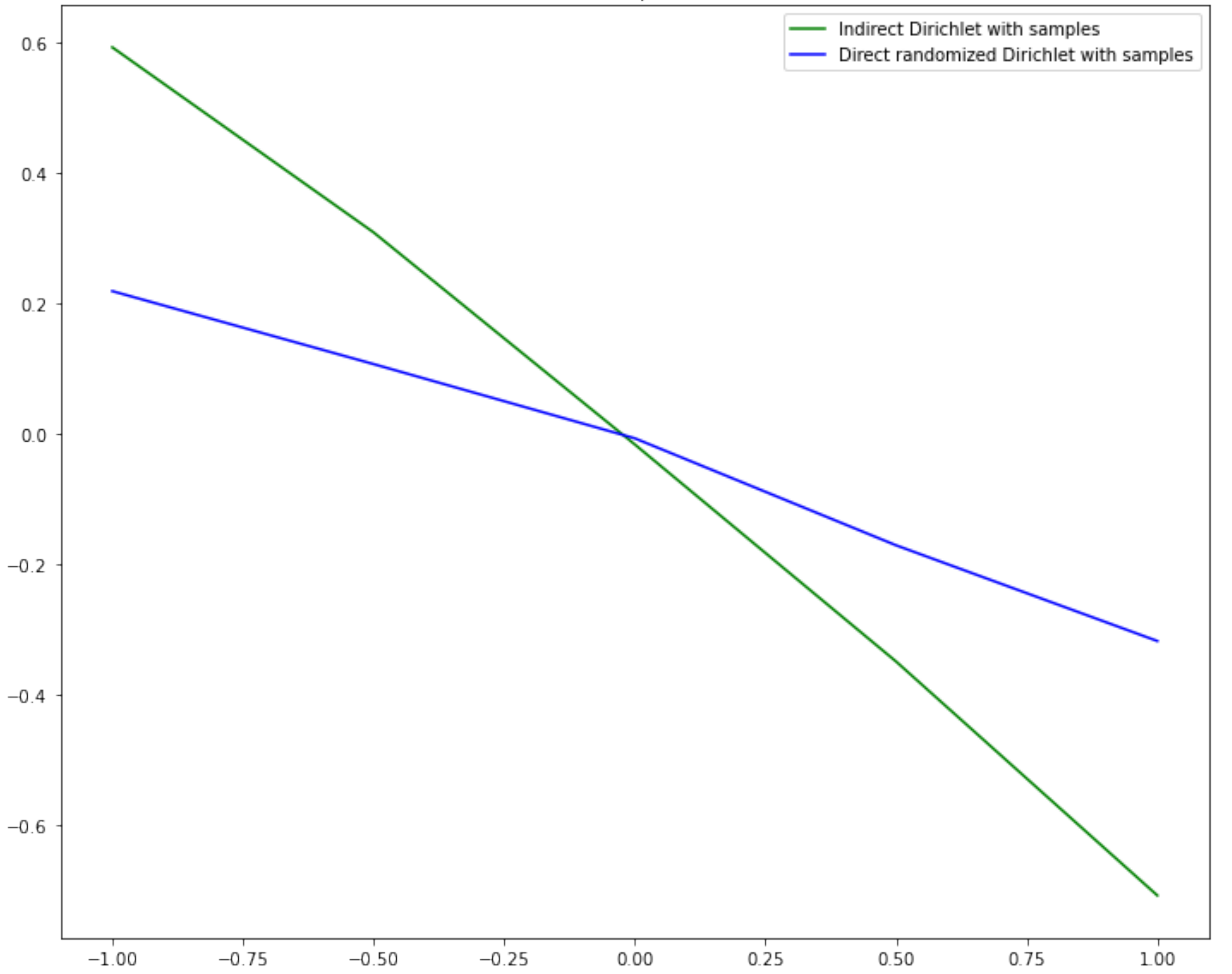}
\end{minipage}
\caption{Values of different estimators of $V(\mu,\nu)$ plotted against $s$ for Example \ref{ex:2} (left) and \ref{ex:3} (right). Both plots use $N=100$ samples.}\label{fig:2}
\end{center}
\end{figure}

Discounting numerical errors, all estimators seem to detect convex order. The direct randomized Dirichlet method is less complex; however it does not seem to explore the $\mathcal{P}^1(\R^d)$-space as well as the two indirect Dirichlet methods. On the other hand, both of the indirect Dirichlet methods yield very similar results for the examples considered. As the name suggests, the ``indirect Dirichlet method with samples" works on samples directly, which might be more convenient for practical applications on real data.

As can be expected from the numerical implementation, the histogram method consistently yields the lowest runtimes, while runtimes of the other methods are much higher. Indeed, when working with samples, the weights of the empirical distributions are constant, while the OT cost matrices $\textbf{M}_{\textbf{a}}$ and $\textbf{M}_{\textbf{b}}$ in the implementation have to re-computed in each iteration and this is very costly; for the histogram method, the weights $\rho$ change, while the grid stays constant --- and thus also $\textbf{M}_{\textbf{a}}$ and $\textbf{M}_{\textbf{b}}$ .

\section{Model independent arbitrage strategies}\label{sec:arbitrage}

Let us consider a financial market with $d$ financial assets and denote its price process by $(S_t)_{t\ge 0}$. Let us assume $S_0=s_0\in \R$ and fix two maturities $T_1<T_2$. If call options with these maturities are traded at all strikes, then the prices of the call options determine the distribution of $S_{T_1}$ and $S_{T_2}$ under any martingale measure; this fact was first established by \cite{breeden1978prices}. Let us denote the laws of $S_{T_1}$ and $S_{T_2}$ by $\mu$ and $\nu$  respectively. If trading is only allowed at $0,T_1$ and $T_2$, the following definition is natural and will be crucial for our analysis.

\begin{definition}
The triple of measurable functions $(u_1, u_2, \Delta)$ is a model-independent arbitrage if $u_1\in L^1(\mu)$, $u_2\in L^1(\nu)$ and 
\begin{align*}
u_1(x)-\int u_1\,d\mu+u_2(y)-\int u_2\,d\nu+\Delta(x)(y-x)>0, \quad\text{for all }(x,y)\in \R^d\times \R^d.
\end{align*}
If no such strategies exist, then we call the market \emph{free of model-independent arbitrage}.
\end{definition}

In the above, $u_1$ and $u_2$ can be interpreted as payoffs of Vanilla options with market prices $\int u_1\,d\mu$ and $\int u_2\,d\nu$ respectively, while the term $\Delta(x)(y-x)$ denotes the gains or losses from buying $\Delta(x)$ assets at time $T_1$ and holding them until $T_2$.

The following theorem makes the connection between model-independent arbitrages and convex order of $\mu$ and $\nu$ apparent. It can essentially be found in \cite[Theorem 3.4]{guyon2017bounds}.

\begin{theorem}\label{thm:guyon}
The following are equivalent:
\begin{enumerate}[(i)]
\item The market is free of model-independent arbitrage.
\item $\mathcal{M}(\mu,\nu)\neq \emptyset$.
\item $\mu\preceq_c\nu$.
\end{enumerate}
In particular, if $\mu\npreceq_c\nu$, then there exists a convex function $f$, such that the triple $(-f(x), f(y), -g(x))$ is a model-independent arbitrage. Here $g$ is a measurable selector of the subdifferential of $f$.
\end{theorem}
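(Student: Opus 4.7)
The plan is to prove (ii) $\Leftrightarrow$ (iii) via Strassen's theorem, then establish (ii) $\Rightarrow$ (i) by an integration argument against the martingale measure, and finally close the loop by showing the contrapositive $\neg$(iii) $\Rightarrow \neg$(i) through an explicit construction of an arbitrage, which simultaneously yields the ``in particular'' statement.

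\textbf{Step 1: (ii) $\Leftrightarrow$ (iii).} This is precisely Strassen's theorem, as already cited in the introduction: $\mathcal{M}(\mu,\nu)\neq \emptyset$ iff $\mu \preceq_c \nu$.

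\textbf{Step 2: (ii) $\Rightarrow$ (i).} Suppose $\pi \in \mathcal{M}(\mu,\nu)$ and assume for contradiction that $(u_1, u_2, \Delta)$ is a model-independent arbitrage. Integrating the strict pointwise inequality against $\pi$, the $u_1$ and $u_2$ terms are cancelled by their centering (since the marginals of $\pi$ are $\mu$ and $\nu$), while the martingale property makes $\int \Delta(x)(y-x)\,\pi(dx,dy) = 0$ (after a standard truncation/localisation to handle integrability of $\Delta(x)(y-x)$, which is justified since $\Delta$ can be restricted to bounded Borel functions without loss of the arbitrage condition after stopping). Thus the integral of a strictly positive function is zero, a contradiction.

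\textbf{Step 3: $\neg$(iii) $\Rightarrow \neg$(i), and the construction.} Suppose $\mu \not\preceq_c \nu$. By Corollary \ref{cor:1} (or directly Theorem \ref{thm:main2} after rescaling), there exists a convex $1$-Lipschitz function $f\colon \R^d \to \R$ with $\int f\,d\mu > \int f\,d\nu$. Since $f$ is $1$-Lipschitz and $\mu,\nu$ have finite first moments, $f \in L^1(\mu) \cap L^1(\nu)$. By \cite[2.1.3.3]{villani2003topics}, $\partial f(x)$ is non-empty for every $x$, and since $f$ is $1$-Lipschitz, every element of $\partial f(x)$ has norm at most $1$; a standard measurable selection argument (e.g.\ Castaing's theorem) yields a Borel measurable selector $g\colon \R^d \to \R^d$ with $g(x) \in \partial f(x)$ and $|g(x)| \le 1$. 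Now define the candidate arbitrage $(u_1, u_2, \Delta) := (-f, f, -g)$. By the subdifferential inequality $f(y) - f(x) - \langle g(x), y-x\rangle \ge 0$, we compute
\begin{align*}
&u_1(x) - \int u_1\,d\mu + u_2(y) - \int u_2\,d\nu + \langle\Delta(x), y-x\rangle \\
&\quad= \bigl[f(y) - f(x) - \langle g(x), y-x\rangle\bigr] + \Bigl[\int f\,d\mu - \int f\,d\nu\Bigr] \\
&\quad\ge \int f\,d\mu - \int f\,d\nu > 0
\end{align*}
for every $(x,y) \in \R^d \times \R^d$. This exhibits $(-f, f, -g)$ as a model-independent arbitrage and proves the ``in particular'' claim.

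\textbf{Main obstacle.} The only non-trivial point is ensuring the integrability hypothesis $u_1 \in L^1(\mu)$, $u_2 \in L^1(\nu)$ for the arbitrage. A naive application of the definition of convex order only produces some convex test function $f$, which need not be integrable. The key observation is that Theorem \ref{thm:main2} gives us the much stronger quantitative statement that the obstruction to convex order is already witnessed by a $1$-Lipschitz convex function, which automatically lies in $L^1(\mu) \cap L^1(\nu)$, and whose subdifferential admits a bounded measurable selector. This is precisely where the main result of the paper feeds into the financial application.
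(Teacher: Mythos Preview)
Your proof is correct and follows the same three--step cycle as the paper: Strassen for (ii)$\Leftrightarrow$(iii), integration against a martingale coupling for (ii)$\Rightarrow$(i), and an explicit calendar spread for $\neg$(iii)$\Rightarrow\neg$(i).

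The only real difference is in Step~3. The paper simply takes ``by definition'' a convex $f$ with $\int f\,d\mu>\int f\,d\nu$ and proceeds, without commenting on the requirement $u_1\in L^1(\mu)$, $u_2\in L^1(\nu)$ in the definition of a model-independent arbitrage. You instead invoke Theorem~\ref{thm:main2} to secure a $1$-Lipschitz convex witness, which makes integrability of $f$ and boundedness of the selector $g$ automatic. That extra care is a genuine improvement over the paper's write-up. However, your ``Main obstacle'' paragraph overstates matters: Theorem~\ref{thm:main2} is not \emph{needed} here. The same integrability can be obtained by the elementary approximation used in the proof of Corollary~\ref{cor:1} (any convex $f$ is a monotone limit of Lipschitz convex functions), so the witness can be taken Lipschitz without appealing to the main result. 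In the paper, the point where Theorem~\ref{thm:main2} actually feeds into the financial application is \emph{after} this theorem, in the discussion of how to \emph{compute} the arbitrage via the optimal $\rho$ and the Brenier map.

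A minor remark on Step~2: your parenthetical about truncating $\Delta$ is unnecessary and slightly off. Conditioning on $x$ first, $\Delta(x)$ is constant and $y$ is $\pi_x$-integrable (since $\nu\in\mathcal P_1$), so $\int \Delta(x)(y-x)\,\pi_x(dy)=0$ for $\mu$-a.e.\ $x$; then integrating the strictly positive expression $h(x,y)$ via the tower property gives the contradiction directly, with no truncation needed. The paper also glosses over this point.
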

The strategy $(-f(x), f(y), -g(x))$ is often called a \textit{calendar spread}. As our setting is not quite exactly covered by \cite[Theorem 3.4]{guyon2017bounds} and the proof is not hard, we include it here.
\begin{proof}[Proof of Theorem \ref{thm:guyon}]
(ii)$\Leftrightarrow$(iii) is Strassen's theorem, see \cite{strassen1965existence}. If $\mu\npreceq_c\nu$, then by definition there exists a convex function $f$ such that $$\int f\,d\mu>\int f\,d\nu.$$ On the other hand, $f$ is convex and thus satisfies
\begin{align*}
f(y)-f(x)\ge g(x)(y-x) \quad \text{for all }(x,y)\in \R^d\times \R^d.
\end{align*}
Combining the two equations above shows that $(-f(x), f(y), -g(x))$ is a model-independent arbitrage, and thus (i)$\Rightarrow$(iii). It remains to show (ii)$\Rightarrow$(i), which is well known. Indeed, taking expectations in the inequality 
\begin{align*}
u_1(x)-\int u_1\,d\mu+u_2(y)-\int u_2\,d\nu+\Delta(x)(y-x)>0, \quad\text{for all }(x,y)\in \R^d\times \R^d
\end{align*}
under any martingale measure with marginals $\mu,\nu$ leads to a contradiction. This concludes the proof.
\end{proof}

As a direct consequence of Theorem \ref{thm:guyon}, we can use Theorem \ref{thm:main2} to detect model-independent arbitrages in the market under consideration: indeed, Theorem \ref{thm:main2} states that  $\mu\npreceq_c\nu$ implies existence of a probability measure $\rho\in \mathcal{P}^1(\R^d)$ satisfying 
\begin{align*}
C(\rho, \nu)-C(\rho,\mu)<0.
\end{align*}
Next, if $\nu\sim \lambda$, then the proof of Theorem \ref{thm:main2} shows that  $\rho=\nabla \hat{f}(x)_*\nu$ for some convex function $\hat{f}:\R^d\to \R$ and
\begin{align*}
\int \hat{f}\,d\nu-\int \hat{f}\,d\mu\le C(\rho, \nu)-C(\rho,\mu)<0, \quad \text{i.e. }\int \hat{f}\,d\nu<\int \hat{f}\,d\mu.
\end{align*}
In particular, a model-independent arbitrage strategy is given by calendar spread $(-\hat{f}(x), \hat{f}(x),-\nabla \hat{f}(x))$. Via an approximation argument, this result remains true for arbitrary probability measures $\nu\in \mathcal{P}_1(\R^d)$. In particular, we can use the same methods as in Section \ref{sec:num} to find $\rho$. We then estimate $\nabla \hat f(x)$ from the optimizing transport plan $\pi\in \Pi(\rho,\nu)$ of $C(\rho, \nu)$ by taking the conditional expectation $\int x\,\pi_y(dx)$, where $(\pi_y)_{y\in \R^d}$ denotes the conditional probability distribution of $\pi$ with respect to its second marginal $\nu$. This is a standard technique (see e.g. \cite{deb2021rates} for details). In conclusion we can obtain an explicit arbitrage strategy.

To illustrate the ideas outlined above, we return to Example \ref{ex:1}, i.e. $\mu=\mathcal{N}(0, \sigma^2 I)$ and $\nu=\mathcal{N}(0, I)$ for $\sigma^2>0$ and $d=1,2$. Having determined $\rho$ such that $C(\rho, \nu)-C(\rho,\mu)<0$, we estimate $\nabla \hat{f}$ numerically. We show estimates for $\nabla \hat{f}$ and $\hat{f}$ in the plots below.

\begin{figure}[h!]
\begin{center}
\begin{minipage}{0.44\textwidth}
\includegraphics[scale=0.4]{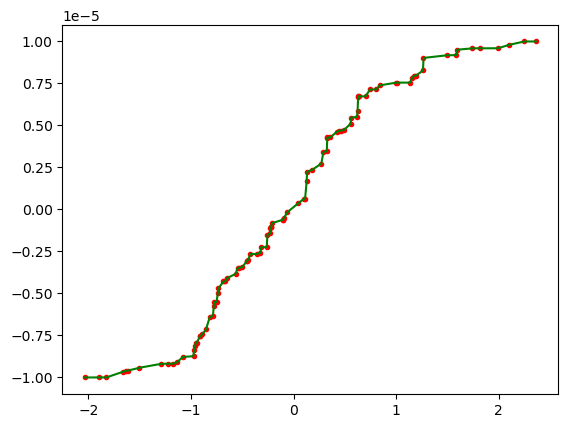}
\end{minipage}
\quad
\begin{minipage}{0.44\textwidth}
\includegraphics[scale=0.38]{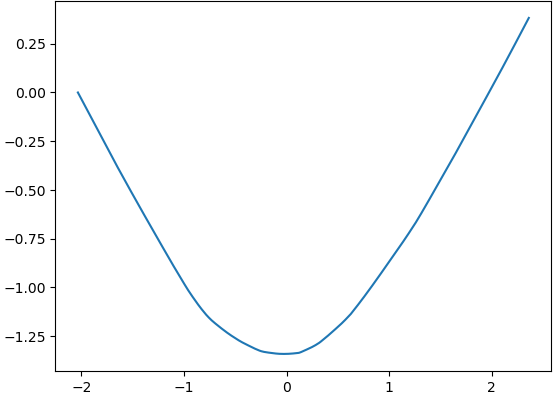}
\end{minipage}
\caption{Plot of estimates for $\nabla\hat{f}$ and $\hat{f}$ for $\mu=\mathcal{N}(0, 2), \nu=\mathcal{N}(0, 1)$, $d=1$. Both plots use $N=100$ samples.}\label{fig:1}
\end{center}
\end{figure}

\begin{figure}[h!]
\begin{center}
\includegraphics[scale=0.22]{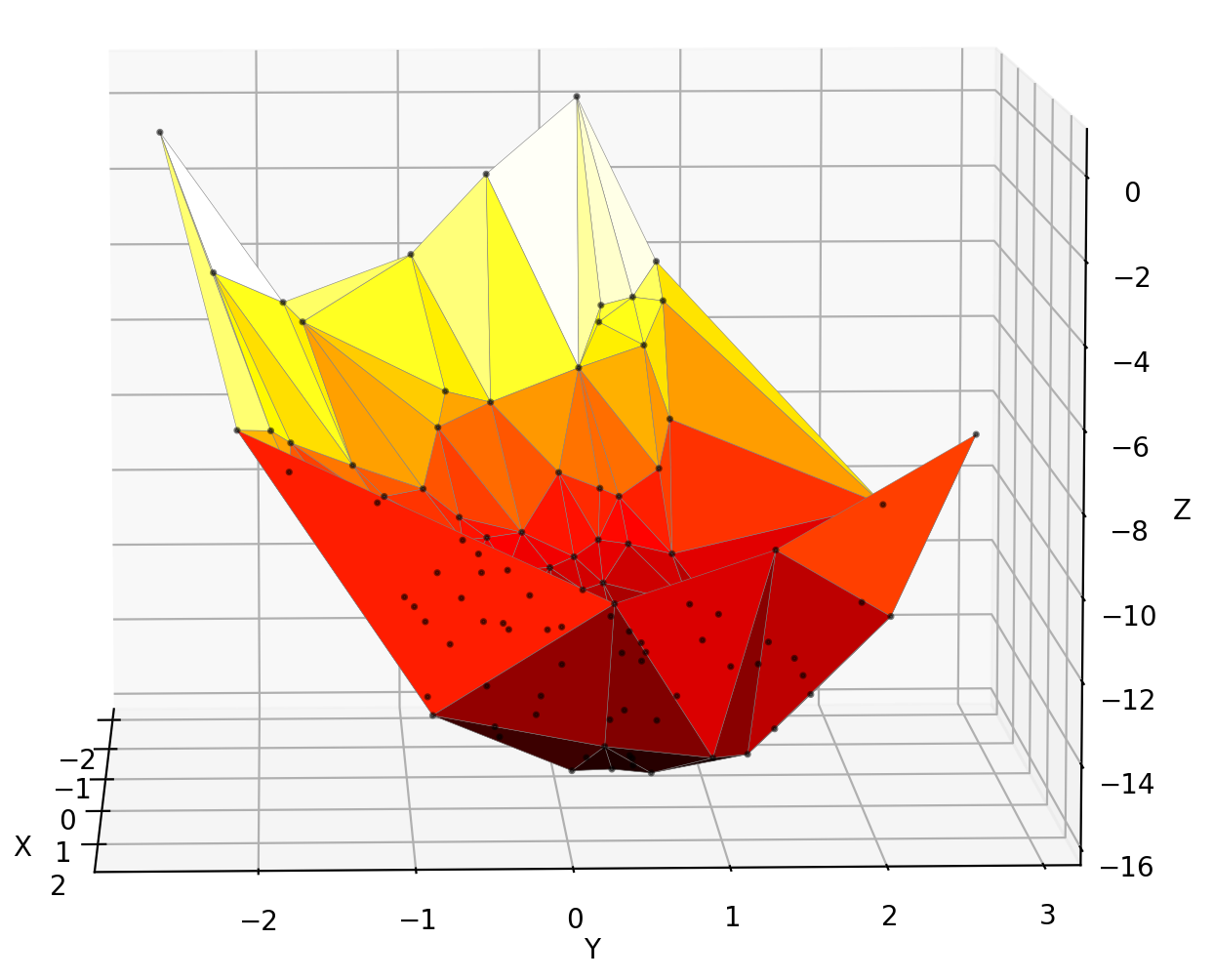}
\caption{Plot of estimate for $\hat{f}$ for $\mu=\mathcal{N}(0, 4I), \nu=\mathcal{N}(0, I)$, $d=2$. Both plots use $N=100$ samples.}\label{fig:2}
\end{center}
\end{figure}

\section{Remaining proofs}\label{sec:rem}

\begin{proof}[Proof of Corollary \ref{cor:c-concave}]
Recall that a function $g:\R^d\to \R$ is $c$-concave, iff $f(x):=|x|^2/2-g(x)$ is convex. In particular 
\begin{align*}
\int g\,d\mu - \int g\,d\nu &=\left(-\int \left[\frac{|x|^2}{2}-g(x)\right]\,\mu(dx)  +\int \left[\frac{|y|^2}{2}-g(y)\right]\,\nu(dy) \right)\\
&\qquad+\int \frac{|x|^2}{2}\,\mu(dx)-\int \frac{|y|^2}{2}\,\nu(dy)\\
&=\int f\,d\nu - \int f\,d\mu +\int \frac{|x|^2}{2}\,\mu(dx)-\int \frac{|y|^2}{2}\,\nu(dy).
\end{align*}
By \eqref{eq:res} we obtain
\begin{align*}
&\inf_{g\ c\text{-concave}}\left(\int g\,d\mu - \int g\,d\nu \right)\\
&=\inf_{f\text{ convex}}\left(\int f\,d\nu - \int f\,d\mu \right)+\int \frac{|x|^2}{2}\,\mu(dx)-\int \frac{|y|^2}{2}\,\nu(dy)\\
&=\frac{1}{2} \inf_{\rho\in \mathcal{P}^\infty(\R^d)} \Big(\mathcal{W}_2^2(\mu,\rho) -\mathcal{W}_2^2(\nu,\rho) +\int |y|^2\,\nu(dy) -\int|x|^2 \,\mu(dx)\\
&\qquad\qquad\qquad +\int |x|^2\,\mu(dx)-\int |y|^2\,\nu(dy)\Big)\\
&= \frac{1}{2} \inf_{\rho\in \mathcal{P}^\infty(\R^d)} \Big( \mathcal{W}_2(\mu,\rho)^2 - \mathcal{W}_2(\nu,\rho)^2\Big).
\end{align*}
This concludes the proof.
\end{proof}

\begin{proof}[Proof of Corollary \ref{cor:easy}]
The first claim follows from Corollary \ref{thm:main} by setting $\rho=\mu$. By \eqref{eq:was} the above implies
\begin{align*}
2\int |x|^2\,\mu(dx)\le 2\sup_{\pi\in \Pi(\mu,\nu)} \int \langle x,y\rangle \,\pi(dx,dy),
\end{align*}
so the second claim follows.
\end{proof}

\begin{proof}[Proof of Corollary \ref{cor:d1}]
First, \citep[Theorem 2 \& Lemma 1]{wang2020distortion} show that $\mu\preceq_c\nu$ iff
\begin{align}\label{eq: ruodu}
\int_0^1 [F_\nu^{-1}(1-u)-F_\mu(1-u)]\,dh(u)\ge 0
\end{align}
for all concave functions $h$ such that the above integral is finite. As any concave function is Lebesgue-almost surely differentiable, standard approximation arguments imply that \eqref{eq: ruodu} holds iff
\begin{align*}
\int_0^1 g(u)[F_\nu^{-1}(u)-F_\mu^{-1}(u)]\,du\ge 0
\end{align*}
for all bounded increasing left-continuous functions $g:(0,1)\to \R$. But 
\begin{align*}
\{F_\rho^{-1}: \ \rho \in \mathcal{P}(\R) \text{ with bounded support}\} 
\end{align*}
is exactly the set of all bounded increasing left-continuous functions on $(0,1)$. Noting that by \cite[Equation (2.47)]{villani2003topics}
\begin{align*}
\mathcal{W}_2(\nu, \rho)^2&=\int_0^1 (F_{\nu}^{-1}(x)-F_\rho^{-1}(x))^2\,dx\\
&= \int y^2\,\nu(dy)-2\int_0^1 F_{\nu}^{-1}(x) F_{\rho}^{-1}(x)\,dx+\int z^2\,\rho(dz),
\end{align*}
we calculate
\begin{align*}
\mathcal{W}_2(\nu, \rho)^2-\mathcal{W}_2(\mu, \rho)^2 &=\int y^2\,\nu(dy)-2\int_0^1 F_\rho^{-1}(u) F_\nu^{-1}(u) \,du
+\int z^2\,\rho(dz)\\
&\quad -\int x^2\,\mu(dy) +2\int_0^1  F_\rho^{-1}(u) F_\mu^{-1}(u) \,du -\int z^2\,\rho(dz)\\
&= 2\int_0^1  F_\rho^{-1}(u)[F_\mu^{-1}(u)-F_\nu^{-1}(u)]\,du \\
&\quad +\int y^2\,\nu(dy)-\int x^2\,\mu(dy).
\end{align*}
This concludes the proof.
\end{proof}

\bibliographystyle{plainnat}
\bibliography{bib}
\end{document}